\documentclass[reqno,12pt]{amsart}

\usepackage{graphicx,color}
\usepackage{latexsym}
\usepackage{graphicx} 
\usepackage{amsthm,amsmath, amssymb,amsfonts}


\theoremstyle{plain}


\theoremstyle{plain}
\newtheorem{thm}{Theorem}[section]
\newtheorem{cor}[thm]{Corollary}
\newtheorem{lem}[thm]{Lemma}
\newtheorem{prop}[thm]{Proposition}

\newtheorem*{thmA}{Classification Theorem}
\newtheorem*{thm*}{Theorem}


\theoremstyle{definition}
 
\newtheorem{rmk}[thm]{Remark}
\newtheorem{exam}[thm]{Example}


\begin{document}
\raggedbottom
\allowdisplaybreaks

\title[Spheres with minimal equators]{Spheres with minimal equators}

\author{Lucas Ambrozio}

\address{IMPA - Associação Instituto Nacional de Matem\'atica Pura e Aplicada, Rio de Janeiro, RJ, Brazil, 22460-320.}
\email{\texttt{l.ambrozio@impa.br}}

\thanks{L.A. was supported by CNPq (309908/2021-3 and 302815/2025-2 - Bolsa PQ and 406666/2023-7 - Universal)
and by FAPERJ (grant SEI-260003/000534/2023 - Bolsa E-26/200.175/2023 and grant
SEI-260003/001527/2023 - APQ1 E-26/210.319/2023).}

\begin{abstract}
	We survey the classification of the Riemannian metrics on spheres with respect to which all equators are minimal hypersurfaces, and discuss problems related to these geometries.
	
\end{abstract}
\maketitle

{\centering\footnotesize \textit{Dedicated to Paolo Piccione at his 60th birthday}.\par}

\section{Introduction}

	Let $\mathbb{S}^n$ be the $n$-dimensional sphere in the Euclidean space $\mathbb{R}^{n+1}$ that has centre located at the origin and radius one. An \textit{equator} of $\mathbb{S}^n$ is any $(n-1)$-dimensional sphere determined by the intersection of $\mathbb{S}^n$ with some hyperplane of $\mathbb{R}^{n+1}$ that passes through the origin.
	
	When $\mathbb{S}^n$ is endowed with its canonical Riemannian metric $\overline{g}$ of constant sectional curvature equal to one, all equators become totally geodesic hypersurfaces. In particular, in dimensions $n \geq 3$, all equators are \textit{minimal} hypersurfaces of $(\mathbb{S}^n,\overline{g})$. As such, they are critical points of the $(n-1)$-dimensional volume measure defined by $\overline{g}$. 
	
	We are interested in the following problem:
	\vspace{2mm}	
\begin{center}
\textit{To classify and to understand the Riemannian metrics on the sphere $\mathbb{S}^n$ with respect to which all equators are minimal.}
\end{center}
	\vspace{2mm}

	Any solution to this problem must take into account the action of the group of diffeomorphisms of $\mathbb{S}^n$ that maps equators into equators, because this is the group that acts naturally on the set of such Riemannian metrics, by pullback. 
	
	In dimension $n=2$, the problem has a slightly different quality, because it is problem about geodesics. One can find its complete solution already in the work of Eugenio Beltrami \cite{Bel1}: up to scaling and the aforementioned action, only the canonical metric $\overline{g}$ on $\mathbb{S}^2$ has all the great circles as its geodesics. We describe some aspects of the history of Beltrami's discovery, and discuss some developments it inspired, in Appendix A.
	
\subsection{The classification}
	The first part of the problem has a quite satisfactory answer, in the sense that, as well as being rather definitive, it displays these geometries under a new light and motivates further questions.

\begin{thmA} \label{thmcorrespondenceA}
	There exists a $PGL(n+1,\mathbb{R})$-equivariant, one-to-one correspondence between the set of Riemannian metrics on the sphere $\mathbb{S}^n$ with respect to which all equators are minimal and the set of algebraic curvature tensors in $\mathbb{R}^{n+1}$ with positive sectional curvature.
\end{thmA}

	An \textit{algebraic curvature tensor} in $\mathbb{R}^{n+1}$ is a $4$-linear map $R$ with the same algebraic symmetries of the curvature tensor of a Riemannian metric on a $(n+1)$-dimensional manifold. The sectional curvature of $R$ is positive when $R(v,w,v,w)>0$ for every orthonormal pair $v,w \in \mathbb{R}^{n+1}$. 
	
	The Classification Theorem says, in particular, that on each dimension $n$, the Riemannian metrics on the sphere $\mathbb{S}^n$ with respect to which all equators are minimal can be parametrized by points belonging to an open positive cone of a finite dimensional real vector space of dimension $n(n+2)(n+1)^2/12$, and that this parametrization respects an action of the $n(n+2)$-dimensional projective general linear group on these spaces.
	
	We present, motivate and justify the existence of this correspondence (see Section \ref{seccorrespondence}), and discuss the actions of the projective general linear group (see Section \ref{secactions}). Readers who are in a hurry will find the correspondence described explicitly as an algorithm in Section \ref{subsectalgorithm}, Theorem \ref{thmcorrespondenceasalgorithm}.
	
	The core of the proof of the Classification Theorem that we present here is essentially the same one explained in the paper \cite{AmbMarNev} of Fernando Marques, André Neves and the author, see Section 9.4. It is based on a technique developed by Theodor Hangan \cite{Han1}, who classified all metrics on $\mathbb{R}^n$ with respect to which all hyperplanes are minimal in terms of \textit{Killing symmetric two-tensors}. In fact, the Classification Theorem can ultimately be deduced from his theorem, see Remark \ref{rmkkne}.
		
\subsection{Geometric properties} While the Classification Theorem provides an useful parametrization of the space of metrics with minimal equators, it does not inform how to read off, from the parameters, what are the geometric properties of the Riemannian metrics that correspond to them, or how to decide whether any two of these metrics are isometric to each other. Thus, the task of understanding the geometry of these metrics, and of their minimal equators, is not accomplished yet. 

	We discuss the case $n=3$ in some detail (see Section 4). In such low dimension, there are powerful tools at our disposal. For instance, the equators are the only (immersed) minimal two-spheres in any $(\mathbb{S}^3,g)$ with minimal equators, as a consequence of the Uniqueness Theorem of José Gálvez and Pablo Mira \cite{GalMir}. They all have Morse index one, nullity three, and the same area. Moreover, there is a criterion to decide when two of these metrics are isometric, and the computation of their isometry group is a problem in representation theory. 

\subsection{Perspectives} The classification of spheres with minimal equators by algebraic curvature tensors with positive sectional curvature suggests intriguing new problems. The explicit nature of the correspondence means that experiments can be carried out, for instance, to test conjectures about the geometry of index one minimal two-dimensional spheres, or about the generalised Zoll metrics in minimal submanifold theory introduced by Marques, Neves and the author in \cite{AmbMarNev}. It also suggests a novel way to understand and to distinguish algebraic curvature tensors with positive sectional curvature, in terms of the geometric invariants associated to the corresponding metrics with minimal equators. We hope that a deeper understand of Riemannian manifolds with positive sectional curvature could be achieved from that point-of-view.

\subsection{Overview of the paper} In Section \ref{seccorrespondence}, we construct the correspondence of Classification Theorem explicitly, and establish it in a slightly more general form (Theorem \ref{thmcorrespondenceAfull}). In Section \ref{secactions}, we move on to understand the natural equivariance of the correspondence under the natural actions of the general linear group. Section \ref{sec3d} is dedicated to results about the geometry of three-dimensional spheres with minimal equators. In Section \ref{sex.perspectives}, we compile a list of questions that seems to be promising starting points for future research. 

	The paper also contains two appendices. In Appendix A, we trace back the origin of the problem to the work of Beltrami about the representation of geodesics in charts, explain why the case of codimension-one equators is the only one that still leaves open problems, and propose questions about metrics on $\mathbb{R}^n$ with minimal hyperplanes. In Appendix B, we pose ``inverse problems" of a similar nature on projective spaces, and within the context of Reese Harvey and Blaine Lawson's calibrated geometries \cite{HarLaw}.  \\

	\noindent \textbf{Acknowledgements:} It is a great pleasure and honour to write a contribution to the special issue dedicated to Paolo's 60th birthday. Since my times as a student, I could see that Paolo is one of the pillars of our Brazilian Differential Geometry community, and I admired his mathematics, his energy and his charisma. Congratulations, Paolo! 
	
	I am also grateful to Renato Bettiol and all the editors of this Festschrift volume for the invitation to take part of it. My contribution is in the form of this survey, which is a more or less organised account of some ideas and problems that I have been playing with since I started to work with Fernando Codá and André Neves. Most of its main results have appeared elsewhere already, perhaps in a different/generalised form, in our joint works \cite{AmbMarNev} and \cite{AmbMarNev2}. My hope is that the perspective I chose to present the subject here will stimulate further investigations about this intriguing type geometry.
	
	In the last years, I gave talks about metrics with minimal equators (including in Paolo's birthday conference at USP in August 2024!), and I am grateful to the appreciative comments and interesting questions that were made about them by Dan Ketover, Reto Buzano, Pablo Mira, Renato Bettiol, Claudio Gorodsky, Robert Bryant, Luis Florit and Wolfgang Ziller, among others. The curiosity the theme aroused convinced me it was worth making an effort to bring attention to problems related to it, instead of just leaving them implicit in the literature.
	
	Finally, I would like to thank Gavin Ball, Elahe Khalili and Paul Schwahn, who joined me in the project about spheres with minimal equators that I proposed during the Problem-Solving Workshop ``Computational Geometric Analysis" held at CUNY in June 2025. Many new insights on these metrics have come out our collaboration, and we hope to report on them together in the near future.

\section{The correspondence}\label{seccorrespondence}

\subsection{Algebraic curvature tensors and Killing symmetric tensors}\label{subsectcurvkilling}	Let $R:\mathbb{R}^{n+1}\times\mathbb{R}^{n+1}\times\mathbb{R}^{n+1}\times \mathbb{R}^{n+1}\rightarrow \mathbb{R}$ be an algebraic curvature tensor. This means that $R$ is linear in each of its four entries and satisfies the following identities:
	\begin{itemize}
		\item[$i)$] $R(x,y,z,w)= - R(y,x,z,w)$,
		\item[$ii)$] $R(x,y,z,w)=-R(x,y,w,z)$,
		\item[$iii)$] $R(x,y,z,w)=R(z,w,x,y)$,
		\item[$iv)$] (Bianchi identity) $R(x,y,z,w)+R(x,z,w,y)+R(x,w,y,z)=0$.
	\end{itemize}
	These are precisely the algebraic symmetries of the curvature tensor of a Riemannian metric $g$, defined as
	\begin{equation*}
		R_g(X,Y,Z,W) = g(\nabla^g_X\nabla^g_Y W - \nabla^g_Y\nabla^g_X W - \nabla^g_{[X,Y]} W,Z)
	\end{equation*}	
	for all tangent vector fields $X$, $Y$, $Z$, and $W$. (Beware different conventions!). In the formula above, $\nabla^g$ denotes the Levi-Civita connection of $g$.
	
	Let $Curv(\mathbb{R}^{n+1})$ be the set of algebraic curvature tensors in $\mathbb{R}^{n+1}$. It has the structure of a real vector space, and its dimension is $n(n+2)(n-1)^2/12$.	
	
	Every $R\in Curv(\mathbb{R}^{n+1})$ defines a function on the Grassmanian of unoriented two-planes in $\mathbb{R}^{n+1}$, known as the \textit{sectional curvature} function of $R$. Given $\sigma\subset\mathbb{R}^{n+1}$ a two-dimensional linear subspace, and chosen a basis $\{x,y\}\subset \sigma$, the formula
	\begin{equation*}
		sec_R(\sigma) = \frac{R(x,y,x,y)}{|x|^2|y|^2-\langle x,y \rangle^2}
	\end{equation*}
	computes the sectional curvature of $R$ at the plane $\sigma$. The symmetries of $R$ guarantee that the number $sec_R(\sigma)$ does not depend on the choice of basis of $\sigma$ used to compute it. Also, they guarantee that the tensor $R$ is uniquely determined by the knowledge of its sectional curvature function. In fact, one can write down an explicit (and quite long) polarization formula that computes $R(x,y,z,w)$ in terms of sectional curvatures of $R$ in appropriate planes, which are generated by linear combinations of the vectors $x$, $y$, $z$ and $w$. (See, for instance, \cite{Kar}).
	
	Given an algebraic curvature tensor $R$ on $\mathbb{R}^{n+1}$, let $k=k_R$ be the 2-tensor on $\mathbb{S}^n$ defined by
	\begin{equation}\label{equation.definitionkr}
		k_p(v,w) = R(p,v,p,w) \text{ for all } v,w\in T_p\mathbb{S}^n.
	\end{equation}
	Observe that $T_p\mathbb{S}^n$ is nothing else than the set of vectors of $\mathbb{R}^{n+1}$ that are orthogonal to $p$.
	
	The $2$-tensor $k$ has three remarkable properties, which are immediate consequences of the symmetries of $R$. First, $k$ is a symmetric two-tensor. Moreover, for every unit vector $v\in T_p\mathbb{S}^n$,
	\begin{equation*}
		k_p(v,v)=sec_R(span\{p,v\}).
	\end{equation*}
	Finally, given any normalised geodesic of $(\mathbb{S}^n,\overline{g})$,
	\begin{equation*}
		\gamma(t) = \cos(t)p+\sin(t)v,
	\end{equation*}
	where $\{p,v\}$ is an orthonormal basis for a two-dimensional linear space $\sigma\subset\mathbb{R}^{n+1}$,
	we have that
	\begin{equation*}
		k_{\gamma(t)}(\gamma'(t),\gamma'(t)) = sec_R(span\{\gamma(t),\gamma'(t)\})=sec_R(\sigma).
	\end{equation*}
	
	On any Riemannian manifold, the symmetric tensors that are constant along all of its geodesics are called \textit{Killing symmetric tensors}. In other words, the three observations above amount to the conclusion that the tensor $k_R$ is an element of the vector space $\mathcal{K}_2(\mathbb{S}^n,\overline{g})$ consisting of the Killing symmetric two-tensors of $(\mathbb{S}^n,\overline{g})$.
	
	\begin{lem}\label{lemacurvkil}
		The map
	\begin{equation*}
		R \in Curv(\mathbb{R}^{n+1}) \mapsto k_R \in \mathcal{K}_2(\mathbb{S}^n,\overline{g})
	\end{equation*}	
	defined by \eqref{equation.definitionkr} is a linear isomorphism.
	\end{lem}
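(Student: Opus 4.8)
The plan is as follows. Linearity of $R\mapsto k_R$ is immediate from the defining formula \eqref{equation.definitionkr}, and, as observed just before the statement, $k_R$ always belongs to $\mathcal{K}_2(\mathbb{S}^n,\overline{g})$, so the map is well-defined. There remain two things to check: injectivity and surjectivity.

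For \emph{injectivity}, suppose $k_R\equiv 0$. Let $\sigma\subset\mathbb{R}^{n+1}$ be an arbitrary two-plane and pick an orthonormal basis $\{p,v\}$ of $\sigma$; then $p\in\mathbb{S}^n$ and $v\in T_p\mathbb{S}^n$, so
\[
sec_R(\sigma)=R(p,v,p,v)=k_p(v,v)=0 .
\]
Hence $sec_R$ vanishes identically, and since an algebraic curvature tensor is recovered from its sectional curvature function by the polarization formula recalled above, $R=0$. Thus the map is injective, and in particular $\dim\mathcal{K}_2(\mathbb{S}^n,\overline{g})\geq\dim Curv(\mathbb{R}^{n+1})$.

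For \emph{surjectivity}, the strategy is to produce enough algebraic curvature tensors by hand. Given skew-symmetric matrices $A,B$ on $\mathbb{R}^{n+1}$, the four-linear map
\[
T_{A,B}(x,y,z,w)=\tfrac12\big(\langle Ax,y\rangle\langle Bz,w\rangle+\langle Az,w\rangle\langle Bx,y\rangle\big)
\]
satisfies the symmetries $i)$--$iii)$ (because $A$ and $B$ are skew), so it lies in $\mathrm{Sym}^2(\Lambda^2\mathbb{R}^{n+1})$, which decomposes canonically as $Curv(\mathbb{R}^{n+1})\oplus\Lambda^4\mathbb{R}^{n+1}$. Let $R_{A,B}\in Curv(\mathbb{R}^{n+1})$ be the component of $T_{A,B}$ in $Curv(\mathbb{R}^{n+1})$. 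Since any four-form $\omega$ satisfies $\omega(p,v,p,w)=0$, the tensors $T_{A,B}$ and $R_{A,B}$ agree on entries of the form $(p,v,p,w)$, and therefore
\[
(k_{R_{A,B}})_p(v,w)=R_{A,B}(p,v,p,w)=T_{A,B}(p,v,p,w)=\tfrac12\big(\langle Ap,v\rangle\langle Bp,w\rangle+\langle Ap,w\rangle\langle Bp,v\rangle\big),
\]
which is exactly the value at $p$ of the symmetric product $\xi_A\odot\xi_B$ of the $\overline{g}$-dual one-forms of the Killing vector fields $\xi_A(p)=Ap$ and $\xi_B(p)=Bp$ of $(\mathbb{S}^n,\overline{g})$. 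It then suffices to invoke the fact that $\mathcal{K}_2(\mathbb{S}^n,\overline{g})$ is spanned by the symmetric products of the Killing vector fields of the round sphere: every Killing symmetric two-tensor is a linear combination of the $k_{R_{A,B}}$, so $R\mapsto k_R$ is onto. Combined with injectivity, this also forces $\dim\mathcal{K}_2(\mathbb{S}^n,\overline{g})=\dim Curv(\mathbb{R}^{n+1})=\tfrac{1}{12}n(n+2)(n+1)^2$, with the kernel of $(A,B)\mapsto\xi_A\odot\xi_B$ matching the $\Lambda^4\mathbb{R}^{n+1}$ summand.

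The main obstacle is this last input — the spanning statement for $\mathcal{K}_2(\mathbb{S}^n,\overline{g})$, equivalently the computation of its dimension. This is precisely where the constant curvature of $\overline{g}$ enters: the prolongation of the Killing operator on a space form closes after finitely many derivatives (second order suffices on $\mathbb{S}^n$), so $\mathcal{K}_2(\mathbb{S}^n,\overline{g})$ is finite-dimensional and a Killing two-tensor is pinned down by a finite jet at one point; bookkeeping the $O(n+1)$-representations that arise identifies this jet space with $Curv(\mathbb{R}^{n+1})$. Equivalently, it is the classical superintegrability statement that every polynomial first integral of the geodesic flow of the round sphere is generated by the linear ones, i.e.\ by the momenta of the rotations. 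Everything else above is formal.
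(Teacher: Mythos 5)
Your argument is correct, and for surjectivity it takes a genuinely different route from the paper. The paper handles linearity and injectivity exactly as you do (injectivity via $sec_R\equiv 0\Rightarrow R=0$), but for surjectivity it simply counts dimensions: Barbance's theorem gives the upper bound $\dim\mathcal{K}_2(\mathbb{S}^n,\overline{g})\leq \tfrac{1}{12}n(n+2)(n+1)^2=\dim Curv(\mathbb{R}^{n+1})$, and an injective linear map between spaces of equal finite dimension is onto. You instead exhibit explicit preimages: you build $T_{A,B}\in\mathrm{Sym}^2(\Lambda^2\mathbb{R}^{n+1})$ from skew-symmetric $A,B$, project to $Curv(\mathbb{R}^{n+1})$ along $\Lambda^4\mathbb{R}^{n+1}$, note that the $\Lambda^4$ part is invisible on arguments of the form $(p,v,p,w)$, and conclude that $k_{R_{A,B}}$ is (up to a factor of $\tfrac12$, and modulo the evident typo in the paper's formula \eqref{eqsymmetricproduct}, which should read $\overline{g}(K_1,X)\overline{g}(K_2,Y)+\overline{g}(K_2,X)\overline{g}(K_1,Y)$) the symmetric product $\xi_A\odot\xi_B$; surjectivity then follows from the Sumitomo--Tandai spanning result, which the paper itself quotes immediately after this lemma but does not use in the proof. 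Both routes lean on one nontrivial imported fact about $\mathcal{K}_2(\mathbb{S}^n,\overline{g})$ --- the exact dimension in the paper's case, the generating set in yours --- so neither is more self-contained; but your version buys an explicit inverse on generators and identifies the kernel of $(A,B)\mapsto\xi_A\odot\xi_B$ with the $\Lambda^4$ summand, which meshes nicely with the paper's later emphasis on representing Killing two-tensors by pairs of matrices in $\mathfrak{so}(n+1,\mathbb{R})$, while the paper's version is shorter. Your closing sketch of why the spanning/dimension statement holds (closure of the prolongation of the Killing operator) is only a sketch, but it sits at the same level of rigor as the paper's citation of Barbance.
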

	\begin{proof}
		The linearity is obvious. The injectivity follows from the fact that the only $R\in Curv(\mathbb{R}^{n+1})$ with $sec_R\equiv 0$ is $R=0$. As for surjectivity, Christiane Barbance \cite{Bar} computed the maximal dimension of the set of Killing symmetric two-tensors on a $n$-dimensional Riemannian manifold to be equal to $n(n+2)(n-1)^2/12=dim(Curv(\mathbb{R}^{n+1}))$. The result follows. 
	\end{proof}
	
	We remark that the formula for the dimension of $\mathcal{K}_2(\mathbb{S}^n,\overline{g})$ has been computed by several authors, and is sometimes referred to as the Delong-Takeuchi-Thompson dimension formula (after \cite{Del}, \cite{Tho} and \cite{Tak}).

	We will use Lemma \ref{lemacurvkil} as an intermediate step in the construction of the map from the set $Curv_{+}(\mathbb{R}^{n+1})$ of algebraic curvature tensors with \textit{positive sectional curvature} to the set $\mathcal{E}(\mathbb{S}^n)$ of Riemannian metrics on $\mathbb{S}^n$ with respect to which all equators are minimal. The key observation is that Lemma \ref{lemacurvkil} establishes a bijection between $Curv_{+}(\mathbb{R}^{n+1})$ and the set $\mathcal{K}^{+}_2(\mathbb{S}^n,\overline{g})$ consisting of the \textit{positive definite} Killing symmetric two-tensors of $(\mathbb{S}^n,\overline{g})$.
	
	Before we can proceed, we need to describe other general properties of Killing symmetric two-tensors. First of all, it will be useful to work with their tensorial characterisation. Let $(M,g)$ be a Riemannian manifold. Given a three-tensor $T$ on $M$, its \textit{cyclic symmetrisation} is the three-tensor $T^{S}$ on $M$ defined by
	\begin{equation*}
		T^S(X,Y,Z) = T(X,Y,Z) + T(Y,Z,X) + T(Z,X,Y) 
	\end{equation*}
	for all tangent vector fields $X$, $Y$ and $Z$ on $M$.
\begin{lem}
	Let $k$ be a symmetric two-tensor on a Riemannian manifold $(M,g)$. The following assertions are equivalent:
	\begin{itemize}
		\item[$i)$] $k$ is a Killing symmetric two-tensor of $(M,g)$.
		\item[$ii)$] $\nabla^g k(X,X,X)=0$ for all tangent vector fields $X$ on $M$.
		\item[$iii)$] $(\nabla^g k)^S=0$.
	\end{itemize}	
\end{lem}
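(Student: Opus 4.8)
The plan is to prove the two equivalences $i)\Leftrightarrow ii)$ and $ii)\Leftrightarrow iii)$ separately: the first is just an unwinding of the phrase ``constant along all geodesics'', and the second is pure linear algebra (polarization). For $i)\Leftrightarrow ii)$, I would fix a point $p\in M$ and a vector $v\in T_pM$, and let $\gamma$ be the geodesic with $\gamma(0)=p$ and $\gamma'(0)=v$. Differentiating the function $t\mapsto k_{\gamma(t)}(\gamma'(t),\gamma'(t))$, using metric compatibility of $\nabla^g$ together with the Leibniz rule, and discarding the terms containing $\nabla^g_{\gamma'}\gamma'$ precisely because $\gamma$ is a geodesic, one obtains
\[
\frac{d}{dt}\,k_{\gamma(t)}\bigl(\gamma'(t),\gamma'(t)\bigr)=(\nabla^g_{\gamma'(t)}k)\bigl(\gamma'(t),\gamma'(t)\bigr)=\nabla^g k\bigl(\gamma'(t),\gamma'(t),\gamma'(t)\bigr).
\]
Thus $k$ is constant along $\gamma$ if and only if $\nabla^g k(\gamma'(t),\gamma'(t),\gamma'(t))=0$ for all $t$. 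Since every pair $(p,v)$ occurs as $(\gamma(0),\gamma'(0))$ for some geodesic, and since $\nabla^g k(X,X,X)$ at a point depends only on the value of $X$ there, statement $i)$ (constancy along all geodesics) is equivalent to statement $ii)$.

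For $ii)\Leftrightarrow iii)$, observe first that $(\nabla^g k)^S(X,X,X)=3\,\nabla^g k(X,X,X)$, so $iii)\Rightarrow ii)$ is immediate by restriction to the diagonal. For the converse, the key point is that $\nabla^g k$ is symmetric in its last two arguments, because $k$ is symmetric and $\nabla^g$ acts as a derivation; consequently the cyclic symmetrisation $(\nabla^g k)^S$ is in fact \emph{totally} symmetric in all three arguments, not merely cyclically so. A totally symmetric covariant tensor that vanishes on the diagonal is identically zero, by the usual polarization identity; applying this to $(\nabla^g k)^S$ and invoking $ii)$ yields $(\nabla^g k)^S=0$, which is $iii)$.

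None of the steps is genuinely hard. The two places that require care are: the bookkeeping in the derivative computation for $i)\Leftrightarrow ii)$, where the geodesic equation is exactly what eliminates the unwanted terms and leaves an expression symmetric in $\gamma'$ (and where one must keep track of the index/sign conventions so that the total derivative of the scalar $k(\gamma',\gamma')$ becomes the covariant derivative $\nabla^g k$ evaluated on three copies of $\gamma'$); and the verification that $(\nabla^g k)^S$ is totally symmetric, which is what legitimises the polarization step in $ii)\Rightarrow iii)$.
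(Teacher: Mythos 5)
Your proof is correct and follows essentially the same route as the paper's (differentiating $k(\gamma',\gamma')$ along a geodesic for $i)\Leftrightarrow ii)$, then using the symmetry of $k$ and polarization for $ii)\Leftrightarrow iii)$), only with the details of the polarization step spelled out. The one minor point to watch is the slot convention for $\nabla^g k$: in this paper the differentiation direction is the \emph{last} argument, so $\nabla^g k$ is symmetric in its \emph{first} two slots, but this changes nothing since either partial symmetry combined with cyclic invariance yields the total symmetry your polarization argument needs.
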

\begin{proof}
	If $\gamma(t)$ is a geodesic of $(M,g)$ with $\gamma(0)=p$ and $\gamma'(0)=v$, then $(d/dt_{|t=0})k_{\gamma(t)}(\gamma(t),\gamma(t))=(\nabla^g k)_p(v,v,v)$. The equivalence between $i)$ and $ii)$ follows. The equivalence between $ii)$ and $iii)$ uses the symmetry of $k$, and follows by polarization.  
\end{proof}
	
	Specializing to the case of $(\mathbb{S}^n,\overline{g})$, it is possible to give a simple description of its Killing symmetric two-tensors. Given two Killing vector fields $K_1$ and $K_2$ of $(\mathbb{S}^n,\overline{g})$, the symmetric two-tensor defined by
	\begin{equation} \label{eqsymmetricproduct}
		(K_{1}\odot K_{2})(X,Y) = \overline{g}(K_{1},X)\overline{g}(K_{2},Y) + \overline{g}(K_{2},Y)\overline{g}(K_{1},X),
\end{equation}
for all tangent vector fields $X$ and $Y$, belongs to $\mathcal{K}_2(\mathbb{S}^n,\overline{g})$. Takeshi Sumitomo and Kwoichi Tandai showed that the elements of $\mathcal{K}_2(\mathbb{S}^n,\overline{g})$ thus constructed form a set of generators. As each Killing vector field $K$ of $(\mathbb{S}^n,\overline{g})$ corresponds to a unique skew-symmetric matrix $V\in \mathfrak{so}(n+1,\mathbb{R})$ so that $K(p)=Vp$ for every $p\in \mathbb{S}^n$, we see that a fairly concrete and useful representation of all elements of the space $\mathcal{K}_2(\mathbb{S}^n,\overline{g})$ is available. 

\subsection{Metrics with minimal equators and positive definite Killing symmetric two-tensors} We first deduce the formula for the mean curvature of equators passing on a given non-empty open subset $U\subset \mathbb{S}^n$ with an arbitrary Riemannian metric $g$, and then analyse what happens when all of them are minimal in $(U,g)$.

	Given a non-zero vector $v$ in $\mathbb{R}^{n+1}$, consider the auxiliary function defined by
\begin{equation*}
	 \hat{V} : p\in \mathbb{S}^n \mapsto \langle p,v \rangle \in \mathbb{R},
\end{equation*}
where we use brackets to denote the Euclidean inner product on $\mathbb{R}^{n+1}$, so that $\overline{g} = \langle -,-\rangle_{\rvert_{\mathbb{S}^n}}$. Then, $\hat{V}$ is a smooth function on $\mathbb{S}^n$ that has zero as a regular value. The set
\begin{equation*}
	\Sigma_{v}=\hat{V}^{-1}(0)=\{x\in \mathbb{S}^n\,|\,\langle x,v\rangle = 0\}
\end{equation*}
is just the equator in $\mathbb{S}^n$ that is orthogonal to $v$. Clearly, $\Sigma_{-v}=\Sigma_v$.
	
	Let $g$ be an arbitrary Riemannian metric on an open subset $U$ of $\mathbb{S}^n$. Then
\begin{equation*}
	N^g = \frac{\nabla^{g} \hat{V}}{|\nabla^{g} \hat{V}|_g}
\end{equation*} 
defines a unit normal vector field on $\Sigma_v\cap U$. The second fundamental form of $\Sigma_v\cap U$ at a point $p\in \Sigma_v\cap U$ is then given by
\begin{equation*}
	A_g(X,Y) = g(\nabla^{g}_{X}{N^g},Y) = \frac{1}{|\nabla^{g} \hat{V}|_g} Hess_{g} \hat{V}(X,Y)
\end{equation*}
for all $X$, $Y\in T_{p}\Sigma_v$. Taking the trace over $\Sigma_v$, we have:
\begin{lem}
	The mean curvature of $\Sigma_v\cap U$ with respect to $g$ is
\begin{equation} \label{eqmeancurvature}
	H_ g = \frac{1}{|\nabla^{g}\hat{V}|_g}\left(\Delta_{g}\hat{V} - Hess_{g}\hat{V}(N^g,N^g)\right).
\end{equation}
\end{lem}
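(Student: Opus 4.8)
The plan is to obtain \eqref{eqmeancurvature} directly from the expression for the second fundamental form just recorded, by taking its trace against $g$ along $\Sigma_v$ in a frame adapted to $g$. First I would fix a point $p\in\Sigma_v\cap U$ and, using that $N^g(p)$ is a $g$-unit vector that is $g$-orthogonal to $T_p\Sigma_v$, choose a $g_p$-orthonormal basis $\{e_1,\dots,e_{n-1}\}$ of $T_p\Sigma_v$ and complete it to a $g_p$-orthonormal basis $\{e_1,\dots,e_{n-1},e_n=N^g(p)\}$ of $T_p\mathbb{S}^n$. By definition the mean curvature is the $g$-trace of $A_g$ over the tangent space of the hypersurface, so $H_g(p)=\sum_{i=1}^{n-1}A_g(e_i,e_i)$, and substituting the identity $A_g(X,Y)=|\nabla^{g}\hat{V}|_g^{-1}\Hess_{g}\hat{V}(X,Y)$ valid on $T_p\Sigma_v$ turns this into $|\nabla^{g}\hat{V}|_g^{-1}\sum_{i=1}^{n-1}\Hess_{g}\hat{V}(e_i,e_i)$.

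Next I would use that $\Delta_g\hat{V}$ is the \emph{full} $g$-trace of $\Hess_g\hat{V}$, that is, $\Delta_g\hat{V}=\sum_{i=1}^{n}\Hess_{g}\hat{V}(e_i,e_i)$ in the same basis (this is the sign convention under which $\Sigma_v$ is minimal precisely when $H_g=0$). Isolating the $i=n$ summand, which equals $\Hess_{g}\hat{V}(N^g,N^g)$ since $e_n=N^g(p)$, gives $\sum_{i=1}^{n-1}\Hess_{g}\hat{V}(e_i,e_i)=\Delta_g\hat{V}-\Hess_{g}\hat{V}(N^g,N^g)$, and plugging this back into the previous expression yields the asserted formula; since the computation is pointwise and $p$ was arbitrary in $\Sigma_v\cap U$, the lemma follows.

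There is essentially no hard step here: the whole content is a one-line trace identity built on the formula for $A_g$ already established. The only place where care is needed is that the orthogonal splitting $T_p\mathbb{S}^n=T_p\Sigma_v\oplus\mathbb{R}\,N^g(p)$ used to peel off the normal contribution must be taken with respect to $g$ — not the round metric $\overline{g}$, for which $N^g$ is in general not normal to $\Sigma_v$ — so that the partial trace over $\Sigma_v$ really equals $\Delta_g\hat{V}$ minus a single diagonal entry; and, relatedly, one should fix the Laplacian sign convention at the outset so that it matches the geometric normalization of $H_g$.
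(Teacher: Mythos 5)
Your argument is correct and is exactly what the paper does: the lemma is stated immediately after the formula for $A_g$ with the phrase ``taking the trace over $\Sigma_v$'', which is precisely your computation of summing $A_g(e_i,e_i)$ over a $g$-orthonormal frame of $T_p\Sigma_v$ and rewriting the partial trace of $\Hess_g\hat V$ as $\Delta_g\hat V-\Hess_g\hat V(N^g,N^g)$. Your added remarks about using the $g$-orthogonal (not $\overline g$-orthogonal) splitting and fixing the Laplacian sign convention are sensible points of care but do not constitute a departure from the paper's route.
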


	Recall that the Hessian of the function $\hat{V}$ with respect to a Riemannian metric $g$ is computed by
\begin{equation} \label{eqdefhessian}
	Hess_{g}\hat{V}(X,Y) = X(Y\hat{V}) - (\nabla^{g}_{X}Y)\hat{V}
\end{equation}
for all tangent vector fields $X$ and $Y$. In the particular case of the canonical metric $\overline{g}$, since the function $\hat{V}$ satisfies Obata's equation,
\begin{equation*}
	Hess_{\overline{g}}\hat{V}+ \hat{V}\overline{g}=0,
\end{equation*}
$Hess_{\overline{g}}\hat{V}$ vanishes identically on $\Sigma_v=\hat{V}^{-1}(0)$. Combining this fact with \eqref{eqdefhessian}, we deduce the following identity, which is valid for an arbitrary Riemmanian metric $g$ on an open subset $U$ of $\mathbb{S}^n$:
\begin{multline}
	(Hess_{g}\hat{V})_{p}(X,Y) =  (d\hat{V})_{p}(\overline{\nabla}_{X}Y - \nabla^{g}_{X}Y) \label{eqhessianconection} \\
	\text{for all }p\in \Sigma_{v}\cap U, \text{ and for all } X,Y\in T_{p}\mathbb{S}^n.
\end{multline}

	Consider the three-tensor $\mathcal{T}_g$ on $U$ given by
	\begin{equation} \label{eqfundamentaltensor}
			\mathcal{T}_{g}(X,Y,Z) = g(\nabla^{g}_{X}Y - \overline{\nabla}_{X}Y,Z)
	\end{equation} 
	for all tangent vector fields $X$, $Y$ and $Z$.

\begin{lem} \label{lemfundamentaltensor}
	Let $g$ be a Riemannian metric on an open subset $U\subset \mathbb{S}^n$. The three-tensor $\mathcal{T}_g$ defined in \eqref{eqfundamentaltensor} satisfies the following properties:
	\begin{itemize}
		\item[$1)$] $\mathcal{T}_g$ is symmetric in the first two entries.
		\item[$2)$] The cyclic symmetrisation of $\mathcal{T}_g$ is
		\begin{equation*}
			(\mathcal{T}_g)^{S} = \frac{1}{2} (\overline{\nabla}g)^S.
		\end{equation*}
		\item[$3)$] The trace of $\mathcal{T}_{g}$ with respect to $g$ in the second and third entries is
			\begin{equation*}
				tr^{23}_{g}(\mathcal{T}_g) = d\log(\psi),
			\end{equation*}	
			where $\psi$ is the positive smooth function on $U$, uniquely defined by the volume elements of $(U,g)$ and $(U,\overline{g})$, that satisfies
			\begin{equation*}
				dV_g = \psi dV_{\overline{g}}.
			\end{equation*}
		\item[$4)$] For every $v\in \mathbb{R}^{n+1}\setminus\{0\}$, $p\in \Sigma_v\cap U$, and $X,Y\in T_{p}\mathbb{S}^n$,
			\begin{equation*}
				\mathcal{T}_{g}(X,Y,\nabla^{g}\hat{V}(p)) = - Hess_{g}\hat{V}(X,Y),
			\end{equation*}					
			and the trace of $\mathcal{T}_{g}$ with respect to $g$ in the first and second entries satisfies, at every point $p\in \Sigma_v\cap U$, 
			\begin{equation*}
				tr^{12}_{g}(\mathcal{T}_g)(\nabla^g \hat{V}(p)) = -\Delta_g \hat{V}(p).
			\end{equation*}		
	\end{itemize}
\end{lem}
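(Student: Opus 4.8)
The plan is to verify each of the four properties by unwinding the definition \eqref{eqfundamentaltensor} and using standard facts about the Levi-Civita connection together with Obata's equation.

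\textbf{Property 1).} Both $\nabla^g$ and $\overline{\nabla}$ are torsion-free, so the difference tensor $S(X,Y) = \nabla^g_X Y - \overline{\nabla}_X Y$ satisfies $S(X,Y)-S(Y,X) = [X,Y]-[X,Y]=0$, i.e.\ $S$ is symmetric in $X,Y$. Lowering the last index with $g$ preserves this symmetry, which gives the claim.

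\textbf{Property 2).} First I would note that, since $S$ is symmetric in its first two entries, the cyclic symmetrisation can be computed. The key identity is the Koszul-type formula expressing $\overline{\nabla} g$ in terms of $S$: for the canonical connection $\overline{\nabla}$, one has $(\overline{\nabla}_Z g)(X,Y) = Z(g(X,Y)) - g(\overline{\nabla}_Z X, Y) - g(X, \overline{\nabla}_Z Y)$, while the analogous expression with $\nabla^g$ in place of $\overline{\nabla}$ vanishes because $g$ is $\nabla^g$-parallel. Subtracting, $(\overline{\nabla}_Z g)(X,Y) = g(S(Z,X),Y) + g(S(Z,Y),X) = \mathcal{T}_g(Z,X,Y) + \mathcal{T}_g(Z,Y,X)$. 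Taking the cyclic sum over $X,Y,Z$ on both sides and using the symmetry of $\mathcal{T}_g$ in its first two entries, each term $\mathcal{T}_g(a,b,c)$ with $\{a,b,c\}=\{X,Y,Z\}$ appears exactly twice on the right, so $(\overline{\nabla} g)^S = 2(\mathcal{T}_g)^S$, which rearranges to the stated formula.

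\textbf{Property 3).} This is the classical formula relating the trace of the difference of two connections to the ratio of their volume forms. In local coordinates, writing the Christoffel symbols, $tr^{23}_g(\mathcal{T}_g) = g^{jk}\mathcal{T}_g(\,\cdot\,,e_j,e_k)$ picks out $g^{jk}g_{k\ell}(\Gamma^\ell_{ij}-\overline{\Gamma}^\ell_{ij}) = \Gamma^j_{ij}-\overline{\Gamma}^j_{ij} = \partial_i\log\sqrt{\det g} - \partial_i\log\sqrt{\det \overline{g}} = \partial_i \log\psi$, using the standard identity $\Gamma^j_{ij} = \partial_i \log\sqrt{\det g}$ and $\psi = \sqrt{\det g}/\sqrt{\det\overline{g}}$ in any chart. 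Thus $tr^{23}_g(\mathcal{T}_g) = d\log\psi$.

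\textbf{Property 4).} Here I would invoke the identity \eqref{eqhessianconection}, which was derived precisely from Obata's equation for $\hat V$ on $(\mathbb{S}^n,\overline{g})$: for $p \in \Sigma_v \cap U$, $(Hess_g \hat V)_p(X,Y) = (d\hat V)_p(\overline{\nabla}_X Y - \nabla^g_X Y)$. Since $(d\hat V)_p(W) = g(\nabla^g\hat V(p), W)$ for any tangent vector $W$, the right-hand side equals $g(\nabla^g\hat V(p),\, \overline{\nabla}_X Y - \nabla^g_X Y) = -\mathcal{T}_g(X,Y,\nabla^g\hat V(p))$, giving the first equation of 4). Taking the $g$-trace over $X,Y\in T_p\mathbb{S}^n$ of both sides — noting $\nabla^g\hat V(p)$ is a fixed vector, so the trace passes through it — yields $-tr^{12}_g(\mathcal{T}_g)(\nabla^g\hat V(p)) = tr_g Hess_g\hat V = \Delta_g\hat V(p)$, which is the second equation.

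The only genuinely delicate point is Property 2): one must be careful that $\mathcal{T}_g$ is symmetric only in its \emph{first two} slots, not fully symmetric, so the combinatorics of the cyclic sum has to be checked — each of the three cyclic images of $Z$ in the first slot produces, via the symmetry of $S$, exactly the two terms needed, and these assemble into $2(\mathcal{T}_g)^S$ rather than $3(\mathcal{T}_g)^S$ or $6(\mathcal{T}_g)^S$. Everything else is a routine application of the torsion-free and metric-compatibility properties of the two connections and of the already-established identity \eqref{eqhessianconection}.
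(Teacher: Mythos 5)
Your proof is correct and follows essentially the same route as the paper: torsion-freeness for 1), the metric-compatibility computation plus the cyclic-sum count (each term appearing twice, giving the factor $2$) for 2), the volume-density identity for 3), and identity \eqref{eqhessianconection} for 4). The only cosmetic difference is in 3), where you use the coordinate identity $\Gamma^j_{ij}=\partial_i\log\sqrt{\det g}$ while the paper argues invariantly with parallel volume forms in a $g$-orthonormal frame; these are the same standard fact.
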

\begin{proof}
	Property $1)$ is a consequence of the fact that both connections $\nabla^g$ and $\overline{\nabla}$ are torsion free.
	
	Using the compatibility of the connection $\nabla^g$, we also have
	\begin{align*}
		(\overline{\nabla} g) (X,Y,Z)& = Zg(X,Y)-g(\overline{\nabla}_Z X,Y)-g(X,\overline{\nabla}_Z Y)\\
		& = g(\nabla^g_Z X,Y) + g(X,\nabla^g_Z Y) -g(\overline{\nabla}_Z X,Y)-g(X,\overline{\nabla}_Z Y) \\
		& = \mathcal{T}_g(Z,X,Y)+\mathcal{T}_g(Z,Y,X) \\
		& = \mathcal{T}_g(Z,X,Y)+\mathcal{T}_g(Y,Z,X),
	\end{align*}
	by Property $1)$. Property $2)$ follows.
	
	Let $\{e_i\}$ be a local $g$-orthonormal frame. By definition, 
	\begin{equation*}
		dV_g(e_1,\ldots,e_n)=1 \quad \text{and} \quad dV_{\overline{g}}(e_1,\ldots,e_n)=\psi^{-1}
	\end{equation*}
	on the domain of the frame. 	The volume forms $dV_g$ and $dV_{\overline{g}}$ are parallel with respect to the Levi-Civita connections of respective metrics. Hence, for every tangent vector field $X$ on $U$,
	\begin{align*}
		tr_{g}^{23}(\mathcal{T}_g)(X) & = \sum_{i=1}^n g(\nabla^g_X e_i-\overline{\nabla}_X e_i,e_i) \\
		& =\sum_{i=1}^{n} dV_g(e_1,\ldots,(\nabla^g_X e_i-\overline{\nabla}_X e_i),\ldots,e_n)\\
		& = X(dV_g(e_1,\ldots,e_n))-\sum_{i=1}^{n} \psi dV_{\overline{g}}(e_1,\ldots,\overline{\nabla}_X e_i,\ldots e_n) \\
		& = -\psi X(dV_{\overline{g}}(e_1,\ldots,e_n))=-\psi X(\psi^{-1})=d(\log(\psi))(X).
	\end{align*}
	Property $3)$ follows.
	
	Finally, Property $4)$ is a direct consequence of identity \eqref{eqhessianconection}.
\end{proof}

	We can now express the condition that all equators passing through $U$ are minimal in $(U,g)$ as an equation involving $g$ and $\mathcal{T}_g$, or even as an equation involving $g$ and the function $\psi=dV_g/dV_{\overline{g}}$. 
\begin{prop}  \label{propfundamentalequation}
Let $g$ be a Riemannian metric on an open subset $U$ of the sphere $\mathbb{S}^n$. The following three statements are equivalent:
\begin{itemize}
\item[$i)$] All equators intersecting $U$ are minimal.
\item[$ii)$] The three-tensor $\mathcal{T}_g$ defined in \eqref{eqfundamentaltensor} satisfies the equation
	\begin{equation} \label{eqtensorequation1}
		\left(\mathcal{T}_g - g\otimes tr^{12}_{g}(\mathcal{T}_g)\right)^{S}=0 \quad \text{on} \quad U.
	\end{equation}
\item[$iii)$] The metric $g$ and the smooth positive function $\psi$ on $U$ uniquely defined by $dV_{g}=\psi dV_{\overline{g}}$ satisfy
	\begin{equation} \label{eqmetricequation1}
		\left(\overline{\nabla} g - \frac{4}{n+1}d\log(\psi)\otimes g\right)^S = 0 \quad \text{on} \quad U.
	\end{equation}
\end{itemize}	
\end{prop}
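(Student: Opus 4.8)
The plan is to establish the two equivalences $i)\Leftrightarrow ii)$ and $ii)\Leftrightarrow iii)$ separately. The first is a geometric repackaging of the mean curvature formula \eqref{eqmeancurvature} through Lemma \ref{lemfundamentaltensor}$.4)$, completed by a spanning-plus-polarisation argument; the second is pure tensor calculus resting on parts $1)$, $2)$ and $3)$ of Lemma \ref{lemfundamentaltensor}.

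For $i)\Leftrightarrow ii)$, fix $v\in\mathbb{R}^{n+1}\setminus\{0\}$ and $p\in\Sigma_v\cap U$, and abbreviate $w=\nabla^g\hat V(p)\in T_p\mathbb{S}^n$. By \eqref{eqmeancurvature}, $\Sigma_v\cap U$ is minimal at $p$ exactly when $\Delta_g\hat V(p)=Hess_g\hat V(N^g,N^g)(p)$. The two identities in Lemma \ref{lemfundamentaltensor}$.4)$ rewrite the left side as $-tr^{12}_g(\mathcal{T}_g)(w)$ and, since $N^g=w/|w|_g$ at $p$, the right side as $-|w|_g^{-2}\mathcal{T}_g(w,w,w)$; clearing $|w|_g^2=g(w,w)$, the minimality condition at $p$ becomes
\[
	\mathcal{T}_g(w,w,w)-g(w,w)\,tr^{12}_g(\mathcal{T}_g)(w)=0,
\]
i.e.\ $\big(\mathcal{T}_g-g\otimes tr^{12}_g(\mathcal{T}_g)\big)(w,w,w)=0$, equivalently $\big(\mathcal{T}_g-g\otimes tr^{12}_g(\mathcal{T}_g)\big)^S(w,w,w)=0$ (cyclic symmetrisation only contributes the harmless factor $3$ on the diagonal). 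Now the decisive point: for fixed $p\in U$ the equators through $p$ are precisely the $\Sigma_v$ with $v\perp p$, i.e.\ with $v\in T_p\mathbb{S}^n$, and for such $v$ one has $g(\nabla^g\hat V(p),X)=\langle v,X\rangle$ for all $X\in T_p\mathbb{S}^n$; since $g$ is nondegenerate, $w=\nabla^g\hat V(p)$ runs over all of $T_p\mathbb{S}^n$ as $v$ does. Hence all equators meeting $U$ are minimal iff the three-tensor $\big(\mathcal{T}_g-g\otimes tr^{12}_g(\mathcal{T}_g)\big)^S$ — which is totally symmetric because $\mathcal{T}_g-g\otimes tr^{12}_g(\mathcal{T}_g)$ is symmetric in its first two slots by Lemma \ref{lemfundamentaltensor}$.1)$ — vanishes on every diagonal triple over $U$, which by polarisation is exactly equation \eqref{eqtensorequation1}.

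For $ii)\Leftrightarrow iii)$, I would first extract from either equation the auxiliary identity $tr^{12}_g(\mathcal{T}_g)=\tfrac{2}{n+1}d\log(\psi)$. Contracting $(\mathcal{T}_g)^S$ over its first two slots with $g$ gives $tr^{12}_g(\mathcal{T}_g)+2\,tr^{23}_g(\mathcal{T}_g)$ (the symmetry in Lemma \ref{lemfundamentaltensor}$.1)$ identifies the would-be $tr^{13}$ term with the $tr^{23}$ term), which by Lemma \ref{lemfundamentaltensor}$.3)$ equals $tr^{12}_g(\mathcal{T}_g)+2\,d\log(\psi)$; the same contraction of $(g\otimes\theta)^S$ equals $(n+2)\theta$ for any one-form $\theta$. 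Applying these to \eqref{eqtensorequation1} written as $(\mathcal{T}_g)^S=(g\otimes tr^{12}_g(\mathcal{T}_g))^S$ forces $tr^{12}_g(\mathcal{T}_g)+2\,d\log(\psi)=(n+2)\,tr^{12}_g(\mathcal{T}_g)$, hence $tr^{12}_g(\mathcal{T}_g)=\tfrac{2}{n+1}d\log(\psi)$. Substituting this back, and using Lemma \ref{lemfundamentaltensor}$.2)$ in the form $(\mathcal{T}_g)^S=\tfrac12(\overline{\nabla}g)^S$ together with $(g\otimes\theta)^S=(\theta\otimes g)^S$, turns \eqref{eqtensorequation1} into $\tfrac12(\overline{\nabla}g)^S=\tfrac{2}{n+1}(d\log(\psi)\otimes g)^S$, i.e.\ \eqref{eqmetricequation1}. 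The converse runs backwards: from \eqref{eqmetricequation1} and Lemma \ref{lemfundamentaltensor}$.2)$ one gets $(\mathcal{T}_g)^S=\tfrac{2}{n+1}(g\otimes d\log(\psi))^S$, the trace identity again yields $tr^{12}_g(\mathcal{T}_g)=\tfrac{2}{n+1}d\log(\psi)$, and resubstituting recovers \eqref{eqtensorequation1}.

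The main obstacle is organisational rather than conceptual. In the first stage, the delicate point is the passage from a pointwise scalar condition imposed along each individual equator to a single tensorial identity on $U$: this relies on the observation that the gradients $\nabla^g\hat V(p)$ sweep out all of $T_p\mathbb{S}^n$ as the equator through $p$ varies, combined with the polarisation of a totally symmetric three-tensor. In the second stage, one must keep the three distinct contractions ($tr^{12}$, $tr^{23}$, and the contraction of $g\otimes\theta$) carefully apart and invoke the first-two-slot symmetry of $\mathcal{T}_g$ at precisely the right moment; beyond that, everything is bookkeeping with the properties already recorded in Lemma \ref{lemfundamentaltensor}.
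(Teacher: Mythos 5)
Your proposal is correct and follows essentially the same route as the paper: the same reduction of minimality to the diagonal identity $\mathcal{T}_g(w,w,w)=g(w,w)\,tr^{12}_g(\mathcal{T}_g)(w)$ via \eqref{eqmeancurvature} and Lemma \ref{lemfundamentaltensor}$.4)$, the same observation that $v\mapsto\nabla^g\hat V(p)$ sweeps out $T_p\mathbb{S}^n$ followed by polarisation, and the same two trace identities combined with Properties $2)$ and $3)$ for the equivalence $ii)\Leftrightarrow iii)$. The only (cosmetic) difference is that you extract $tr^{12}_g(\mathcal{T}_g)=\tfrac{2}{n+1}d\log(\psi)$ first and substitute, whereas the paper packages the same computation into a single identity for $\mathcal{S}_g^S$ before taking the trace.
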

\begin{proof}
	A point $p$ in $U$ belongs to the equator $\Sigma_{v}$ if and only if $v\in \mathbb{R}^{n+1}\setminus\{0\}$ is orthogonal to $p$, \textit{i.e.} if and only if $v\neq 0$ belongs to $T_{p}\mathbb{S}^n$. Hence, given an arbitrary point $p$ in $U$, it follows from equation \eqref{eqmeancurvature} and Lemma \ref{lemfundamentaltensor}, item $4)$, that the equality
\begin{equation*}
	\mathcal{T}_{g}(\nabla^{g}\hat{V}(p),\nabla^{g}\hat{V}(p),\nabla^{g}\hat{V}(p)) = |\nabla^{g}\hat{V}(p)|^2_{g}\,tr^{12}_{g}(\mathcal{T}_g)(\nabla^{g}\hat{V}(p))
\end{equation*}
holds for every $v\neq 0$ in $T_{p}\mathbb{S}^{n}$ if, and only if, $g$ is a Riemannian metric in $U$ with respect to which all equators containing $p$ have zero mean curvature at $p$.

	The linear map $v \in T_{p}\mathbb{S}^n \mapsto \nabla^{g}\hat{V}(p) \in T_{p}\mathbb{S}^{n}$ is an isomorphism (otherwise, $0$ would be a critical value of $\hat{V}$ for some $v\neq 0$ in $T_{p}\mathbb{S}^n$). Hence, and since $p\in U$ is also arbitrary, the condition
\begin{equation} \label{eqtensorequationa}
		\mathcal{T}_{g}(X,X,X) = g(X,X)tr^{12}_{g}(\mathcal{T}_{g})(X)
\end{equation}
for every tangent vector field $X$ on $U$ is necessary and sufficient for a Riemannian metric $g$ on $U$ to be such that all equators are minimal in $(U,g)$.
	
	The equivalence between $i)$, $ii)$ and $iii)$ will be a consequence of the following observations. First, by polarization, the identity \eqref{eqtensorequationa} is equivalent to its symmetrised version,
\begin{equation} \label{eqtensorequation2}
	\left(\mathcal{T}_g - g\otimes tr^{12}_{g}(\mathcal{T}_g)\right)^{S}=0 \quad \text{on} \quad U,
\end{equation} 
because both $g\otimes tr^{12}_{g}(\mathcal{T}_g)$ and $\mathcal{T}_{g}$ are symmetric in the first two entries. This shows that $i)$ is equivalent to $ii)$.

	Moreover, observe that, in general, a three-tensor $\mathcal{T}$ satisfies
\begin{equation}\label{eqtracegT}
	tr_{g}^{12}( (g\otimes tr_{g}^{12}(\mathcal{T}))^S) = (n+2)tr_{g}^{12}(\mathcal{T}).
\end{equation}
	Another general formula that holds whenever a three-tensor $\mathcal{T}$ is symmetric in the first two variables is
\begin{equation*}
	tr_{g}^{12}(\mathcal{T}^S) = tr_{g}^{12}(\mathcal{T}) + 2tr_g^{23}(\mathcal{T}).
\end{equation*}

	Using the last two general identities together with Properties $2)$ and $3)$ of $\mathcal{T}_g$, we deduce that the tensor $\mathcal{S}_g=\mathcal{T}_g-g\otimes tr_g^{12}(\mathcal{T}_g)$ satisfies
\begin{equation*}
	\mathcal{S}_g^{S} = \left(\frac{1}{2}\overline{\nabla}g - \frac{2}{n+1}g\otimes d\log(\psi) + \frac{1}{n+1}g\otimes tr_{g}^{12}(\mathcal{S}_g^S)\right)^{S}
\end{equation*}
	It is now straightforward to check, using \eqref{eqtracegT}, that 
	\begin{equation*}
		\mathcal{S}_g^{S}=0 \text{ on $U$} \,\, \Leftrightarrow \,\, \overline{\nabla}g - \frac{4}{n+1} g\otimes d\log(\psi)=0 \text{ on $U$}.
	\end{equation*} 
	Hence $ii)$ and $iii)$ are equivalent, as claimed.
\end{proof}

	We can prove the most important ingredient of the first part of the Classification Theorem, which we formulate in a slightly more general set-up:	 	
	
\begin{thm} \label{thmcorrespondenceAfull}
	Let $U$ be a non-empty open subset of $\mathbb{S}^n$, $n\geq 2$. 
	\begin{itemize}
	\item[$i)$] Let $g$ be a Riemannian metric on $U$ and denote by $F_g$ the positive smooth function on $U$ uniquely defined by the identity 
	\begin{equation*}
		dV_g = F_g^{(n+1)/4} dV_{\overline{g}}.
	\end{equation*} 
	If the intersection of all equators with $U$ are minimal hypersurfaces in $(U,g)$, then the tensor
	\begin{equation*}
		k_g := \frac{1}{F_g} g
	\end{equation*}		
	is a positive definite Killing symmetric two-tensor of $(U,\overline{g})$.
	\item[$ii)$] Let $k$ be a positive definite Killing symmetric two-tensor of $(U,\overline{g})$ and denote by $D_k$ the positive smooth function on $U$ uniquely defined by
	\begin{equation*}
		dV_k = D_k^{(n-1)/4} dV_{\overline{g}}.
	\end{equation*} 
	Then, the tensor
	\begin{equation*}
		g_k := \frac{1}{D_k} k
	\end{equation*}	
	is a Riemannian metric on $U$ with respect to which the intersection of all equators with $U$ are minimal hypersurfaces in $(U,g)$.
	\item[$iii)$] Under the notations of items $i)$ and $ii)$, the maps
	\begin{equation*}
		g \mapsto k_g \quad \text{and} \quad k \mapsto g_k
	\end{equation*}
	are the inverse maps of each other.
\end{itemize} 
\end{thm}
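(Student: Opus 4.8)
The plan is to reduce everything to the equivalence $i)\Leftrightarrow iii)$ of Proposition \ref{propfundamentalequation}, together with the tensorial characterisation of Killing symmetric two-tensors (a symmetric two-tensor $k$ is Killing on $(U,\overline{g})$ if and only if $(\overline{\nabla}k)^S=0$). The single computation that drives all three parts is the following. If a Riemannian metric $g$ and a symmetric two-tensor $k$ on $U$ are related by $g=F\,k$ for a positive function $F$, then, since $\overline{\nabla}$ is a derivation, $\overline{\nabla}g = dF\otimes k + F\,\overline{\nabla}k$; and since $d\log F\otimes g = dF\otimes k$, while reordering the entries of the tensor product of a $1$-form with a symmetric $2$-tensor leaves its cyclic symmetrisation unchanged, one obtains
\begin{equation*}
	\bigl(\overline{\nabla}g - d\log F\otimes g\bigr)^S = F\,(\overline{\nabla}k)^S .
\end{equation*}
Thus, once the volume normalisations are arranged so that the coefficient $\tfrac{4}{n+1}\,d\log\psi$ appearing in equation \eqref{eqmetricequation1} becomes exactly $\pm\,d\log F$, that equation for $g$ becomes equivalent to the Killing equation for $k$.

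For part $i)$, I would write $dV_g=\psi\,dV_{\overline{g}}$ and compare with $dV_g=F_g^{(n+1)/4}dV_{\overline{g}}$ to get $\psi=F_g^{(n+1)/4}$, hence $\tfrac{4}{n+1}\,d\log\psi=d\log F_g$. Since all equators meeting $U$ are minimal in $(U,g)$, Proposition \ref{propfundamentalequation} gives $\bigl(\overline{\nabla}g-d\log F_g\otimes g\bigr)^S=0$; substituting $g=F_g\,k_g$ (i.e.\ taking $F=F_g$ in the identity above) yields $F_g\,(\overline{\nabla}k_g)^S=0$, hence $(\overline{\nabla}k_g)^S=0$, so $k_g$ is a Killing symmetric two-tensor, positive definite because $g$ is and $F_g>0$. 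For part $ii)$, I first note that $g_k=D_k^{-1}k$ is a bona fide Riemannian metric on $U$ since $k$ is positive definite and $D_k>0$. Rescaling a $k$-orthonormal frame by $D_k^{1/2}$ gives $dV_{g_k}=D_k^{-n/2}\,dV_k=D_k^{-n/2}D_k^{(n-1)/4}\,dV_{\overline{g}}=D_k^{-(n+1)/4}\,dV_{\overline{g}}$, so the function $\psi_k$ with $dV_{g_k}=\psi_k\,dV_{\overline{g}}$ satisfies $\tfrac{4}{n+1}\,d\log\psi_k=-\,d\log D_k$; then \eqref{eqmetricequation1} for $g_k$ reads $\bigl(\overline{\nabla}g_k+d\log D_k\otimes g_k\bigr)^S=0$, which by the identity above (with $F=D_k^{-1}$, so $g_k=Fk$ and $d\log F=-d\log D_k$) equals $D_k^{-1}(\overline{\nabla}k)^S$, and this vanishes because $k$ is Killing. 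So Proposition \ref{propfundamentalequation} applies and all equators meeting $U$ are minimal in $(U,g_k)$.

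For part $iii)$, I would just match exponents. Starting from $g$ with $dV_g=F_g^{(n+1)/4}dV_{\overline{g}}$ and $k_g=F_g^{-1}g$, the same frame-rescaling computation gives $dV_{k_g}=F_g^{-n/2}\,dV_g=F_g^{-(n-1)/4}dV_{\overline{g}}$, whence $D_{k_g}=F_g^{-1}$ (here the hypothesis $n\ge 2$ is used, so the exponent $(n-1)/4$ is nonzero and $D_{k_g}$ is uniquely determined), and therefore $g_{k_g}=D_{k_g}^{-1}k_g=F_g\cdot F_g^{-1}g=g$. Conversely, from the computation in part $ii)$ we have $dV_{g_k}=D_k^{-(n+1)/4}dV_{\overline{g}}$, so $F_{g_k}=D_k^{-1}$, and $k_{g_k}=F_{g_k}^{-1}g_k=D_k\cdot D_k^{-1}k=k$. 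Hence $g\mapsto k_g$ and $k\mapsto g_k$ are mutually inverse.

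The only real obstacle here is clerical rather than conceptual: one must check that the four exponents $(n\pm 1)/4$, the coefficient $\tfrac{4}{n+1}$ in \eqref{eqmetricequation1}, and the factor $-n/2$ produced by the conformal rescaling of volume elements conspire to give precisely the cancellations displayed above, and keep track of the (harmless) reorderings inside the cyclic symmetrisations. No new analytic input is needed: all the geometric content is already packaged in Proposition \ref{propfundamentalequation} and in the equation $(\overline{\nabla}k)^S=0$ characterising Killing symmetric two-tensors.
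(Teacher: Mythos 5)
Your proposal is correct and follows essentially the same route as the paper: both reduce the theorem to the equivalence $i)\Leftrightarrow iii)$ of Proposition \ref{propfundamentalequation} via the identity $\overline{\nabla}(Fk)=dF\otimes k+F\,\overline{\nabla}k$ and the same exponent bookkeeping for the volume elements, with part $iii)$ obtained from $F_{g_k}D_k=1$ and $F_gD_{k_g}=1$. Your explicit remarks that cyclic symmetrisation is insensitive to the slot ordering of the $1$-form and that $n\geq 2$ is needed for $D_{k_g}$ to be determined are minor clarifications the paper leaves implicit, not a different argument.
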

\begin{proof}
 \textit{Item i)} Using the notation of Proposition \ref{propfundamentalequation}, we have $\psi = F_{g}^{(n+1)/4}$. Hence, the symmetric two-tensor $k_g = (1/F_g)g$, which is clearly positive definite, satisfies
\begin{equation*}
	\overline{\nabla}k_g = \frac{1}{F_g}\overline{\nabla}g -\frac{1}{F_{g}^2}(g\otimes dF_{g}) =\frac{1}{F_g}\left( \overline{\nabla}g - \frac{4}{n+1}g\otimes d\log(\psi)\right).
\end{equation*}
By Proposition \ref{propfundamentalequation}, item $iii)$, $k_g = (1/F_g)g$ is a positive definite Killing symmetric two-tensor of $(U,\overline{g})$.

	\textit{Item ii)} The symmetric two-tensor $g_k=(1/D_k)k$ is clearly a Riemannian metric on $U$, which satisfies
\begin{equation} \label{eqvolumeform}
	dV_{g_k} = \frac{1}{D_k^{n/2}}dV_{k} = \frac{D_k^{(n-1)/4}}{D_{k}^{n/2}} dV_{\overline{g}} = \frac{1}{D_{k}^{(n+1)/4}}dV_{\overline{g}},
\end{equation}
by the definition of $D_k$. Thus, using the notation of Proposition \ref{propfundamentalequation}, $\psi = D_{k}^{-(n+1)/4}$. Since $k$ is a Killing symmetric two-tensor on $(U,\overline{g})$, 
\begin{equation*}
	(\overline{\nabla}g_k)^{S} = \frac{1}{D_k}(\overline{\nabla}k)^S-\frac{1}{D_k^2} \left(k\otimes dD_k\right)^S = \frac{4}{n+1}(g_k \otimes d\log(\psi))^S.
\end{equation*}
By Proposition \ref{propfundamentalequation}, item $iii)$, $g_k$ is a Riemannian metric on $U$ with respect to which the intersection of all equators with $U$ are minimal hypersurfaces.

	\textit{Item iii)} Using the string of equations \eqref{eqvolumeform} and the definitions of $F_g$ and $D_k$, we check that $F_{g_k}D_{k}=1$. Similarly, $F_gD_{k_g}=1$ holds. Since $g=F_gk_g$ and $g_k = (1/D_k)k$, the assertion follows.
\end{proof}

\begin{rmk} \label{rmkkne}
	Morris Knebelman \cite{Kne} showed that if two Riemannian manifolds are related by a diffeomorphism that maps (unparametrised) geodesics into (unparametrised) geodesics, then there exists a bijection between the set of Killing symmetric two-tensors of the two manifolds. This result clarifies that there is relation between Hangan's correspondence  (Theorem 2 in \cite{Han1}) and the correspondence described in Theorem \ref{thmcorrespondenceAfull}. In fact, the \textit{central} or \textit{gnomonic projections} from open hemispheres of $\mathbb{S}^n$ into $\mathbb{R}^n$ are examples of such maps.
\end{rmk}

\begin{rmk}\label{rmkmaximaldomain}
	Let $U$ be a non-empty connected open subset of $\mathbb{S}^n$. Every Killing symmetric two-tensor of $(U,\overline{g})$ has a unique extension to an element of $\mathcal{K}_2(\mathbb{S}^n,\overline{g})$, because the restriction map to $U$ is an injective linear map and has a codomain of dimension at most equal to the dimension of $\mathcal{K}_2(\mathbb{S}^n,\overline{g})$, as shown in \cite{Bar}. Each Killing symmetric two-tensor of $(\mathbb{S}^n,\overline{g})$ has, therefore, a (possibly empty) maximal domain where it is positive definite. In light of Theorem \ref{thmcorrespondenceAfull}, it will be interesting to understand the structure of the connected components of these maximal domains. (See, for instance, Corollary \ref{corantipodalsymmetry} and Remark \ref{rmkrestrictequivariance}).
\end{rmk}

\subsection{The algorithm}\label{subsectalgorithm} It is useful to notice that the above proof of Theorem \ref{thmcorrespondenceAfull} is algorithmic in nature, in the sense that it constructs an explicit map from the space of Riemannian metrics on $U\subset \mathbb{S}^n$ with minimal equators to the space of positive Killing symmetric two-tensors on $(U,\overline{g})$, and at the same time constructs its explicit inverse map. The same can be said about the proof of Lemma \ref{lemacurvkil}. This means that we can formulate the first part of the Classification Theorem as follows:
	
	\begin{thm} \label{thmcorrespondenceasalgorithm}
	Let $\mathcal{E}(\mathbb{S}^n)$ be the set of Riemannian metrics on $\mathbb{S}^n$ with minimal equators, and let $Curv_+(\mathbb{R}^{n+1})$ be the set of algebraic curvature tensors on $\mathbb{R}^{n+1}$ with positive sectional curvature. \\
	
	Consider the following two algorithms: \\

	\textit{1. Input:} $R\in Curv_{+}(\mathbb{R}^{n+1})$.
	\begin{enumerate}
		\item compute the positive definite symmetric two-tensor $k_R$ on $\mathbb{S}^n$ defined by 
		\begin{equation*}
			(k_R)_p(v,w) = R(p,v,p,w)
		\end{equation*}
		for all $p\in \mathbb{S}^n$ and for all vectors $v$, $w\in T_p\mathbb{S}^n$.
		\item compute the positive smooth function $D_R$ on $\mathbb{S}^n$ such that
		\begin{equation*}
			dV_{k_R} = D_R^{\frac{n-1}{4}}dV_{\overline{g}},
		\end{equation*}
		where $dV_{k_R}$ is the volume element of $k_R$ on $\mathbb{S}^n$.
		\item compute the positive definite symmetric two-tensor
		\begin{equation*}
			g_R = \frac{1}{D_R} k_R. 
		\end{equation*}
	\end{enumerate}
	
	\textit{Output:} $g_R\in \mathcal{E}(\mathbb{S}^n)$.	
	
	\vspace{4mm}
	
	\noindent and

		\vspace{4mm}

\textit{2. Input:} $g\in \mathcal{E}(\mathbb{S}^n)$.
	\begin{enumerate}
		\item compute the positive smooth function $F_g$ on $\mathbb{S}^n$ such that
		\begin{equation*}
			dV_g = F_g^{\frac{n+1}{4}}dV_{\overline{g}},
		\end{equation*}
		where $dV_g$ is the volume element of $g$ on $\mathbb{S}^n$. 
		\item compute the Riemannian metric
		\begin{equation*}
			k_g = \frac{1}{F_g} g.
		\end{equation*}
		\item compute the algebraic curvature tensor $R_g$ such that
		\begin{equation*}
			R_g(p,v,p,v) = (k_g)_p(v,v) 
		\end{equation*}
		for every $p\in \mathbb{S}^n$ and every vector $v\in T_p\mathbb{S}^n$.
	\end{enumerate}
	
	\textit{Output:} $R_g\in Curv_+(\mathbb{R}^{n+1})$.

	\vspace{4mm}

\noindent Then, the maps
	\begin{equation*}
		g\in \mathcal{E}(\mathbb{S}^n) \mapsto R_g \in Curv_+(\mathbb{R}^{n+1})	\text{ and } R \in Curv_+(\mathbb{R}^{n+1}) \mapsto g_R\in \mathcal{E}(\mathbb{S}^n)
	\end{equation*}
are well-defined and are the inverse of each other.
\end{thm}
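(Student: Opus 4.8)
The plan is to observe that each of the two algorithms is, by design, a composition of two maps that have \emph{already} been proved to be bijections, and then to read off well-definedness and the inverse property from that fact. The first building block is the linear isomorphism $R\mapsto k_R$ of Lemma~\ref{lemacurvkil}, which --- as noted in the text immediately following that lemma --- restricts to a bijection between $Curv_+(\mathbb{R}^{n+1})$ and the set $\mathcal{K}^{+}_2(\mathbb{S}^n,\overline{g})$ of positive definite Killing symmetric two-tensors of $(\mathbb{S}^n,\overline{g})$. The second building block is the bijection $k\mapsto g_k$, $g\mapsto k_g$ of Theorem~\ref{thmcorrespondenceAfull}, applied with $U=\mathbb{S}^n$, between $\mathcal{K}^{+}_2(\mathbb{S}^n,\overline{g})$ and $\mathcal{E}(\mathbb{S}^n)$. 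Then Algorithm~1 is just $R\mapsto k_R\mapsto g_{k_R}=g_R$, and Algorithm~2 is just $g\mapsto k_g\mapsto R_g$ with $R_g$ the unique algebraic curvature tensor associated to $k_g$ via Lemma~\ref{lemacurvkil}; so the theorem follows once I check that the intermediate objects land in the advertised sets and that the composites are mutually inverse.

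First I would verify that Algorithm~1 is well-defined. Given $R\in Curv_+(\mathbb{R}^{n+1})$, Lemma~\ref{lemacurvkil} (restricted to the positive cones) gives $k_R\in\mathcal{K}^{+}_2(\mathbb{S}^n,\overline{g})$; in particular $k_R$ is a genuine Riemannian metric, so $dV_{k_R}$ exists and $dV_{k_R}/dV_{\overline{g}}$ is a positive smooth function, whence step~(2) does produce a positive smooth function $D_R$ --- and the exponent $(n-1)/4$ is exactly the one appearing in item $ii)$ of Theorem~\ref{thmcorrespondenceAfull}. Applying that item with $U=\mathbb{S}^n$ and $k=k_R$ (for which $D_k=D_R$) shows that $g_R=(1/D_R)k_R=g_{k_R}$ is a Riemannian metric on $\mathbb{S}^n$ all of whose equators are minimal, i.e. $g_R\in\mathcal{E}(\mathbb{S}^n)$. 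For Algorithm~2: given $g\in\mathcal{E}(\mathbb{S}^n)$, step~(1) defines $F_g$ by $dV_g=F_g^{(n+1)/4}dV_{\overline{g}}$ (the exponent in item $i)$ of Theorem~\ref{thmcorrespondenceAfull}), that item gives $k_g=(1/F_g)g\in\mathcal{K}^{+}_2(\mathbb{S}^n,\overline{g})$, and then Lemma~\ref{lemacurvkil} furnishes a unique $R_g\in Curv(\mathbb{R}^{n+1})$ with $R_g(p,v,p,w)=(k_g)_p(v,w)$. It remains to see $R_g\in Curv_+(\mathbb{R}^{n+1})$: for an orthonormal pair $x,y\in\mathbb{R}^{n+1}$ one has $x\in\mathbb{S}^n$ and $y\in T_x\mathbb{S}^n$, hence $sec_{R_g}(span\{x,y\})=R_g(x,y,x,y)=(k_g)_x(y,y)>0$ because $k_g$ is positive definite.

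Finally, for the inverse property, the one book-keeping point to keep in sight is that the auxiliary functions $D_{(\cdot)}$ and $F_{(\cdot)}$ depend only on the two-tensor at hand (relative to the fixed background $\overline{g}$), so equal two-tensors have equal auxiliary functions. Starting from $R\in Curv_+(\mathbb{R}^{n+1})$: running Algorithm~2 on $g_R=g_{k_R}$ produces at step~(2) the tensor $k_{g_R}=k_{g_{k_R}}$, which equals $k_R$ by item $iii)$ of Theorem~\ref{thmcorrespondenceAfull}; then by the uniqueness clause of Lemma~\ref{lemacurvkil}, $R_{g_R}$ is the unique algebraic curvature tensor whose associated two-tensor is $k_{g_R}=k_R$, so $R_{g_R}=R$. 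Conversely, starting from $g\in\mathcal{E}(\mathbb{S}^n)$: by construction $k_{R_g}=k_g$, hence $D_{R_g}=D_{k_g}$, and running Algorithm~1 on $R_g$ gives
\[
g_{R_g}=\frac{1}{D_{R_g}}\,k_{R_g}=\frac{1}{D_{k_g}}\,k_g=g_{k_g}=g
\]
again by item $iii)$ of Theorem~\ref{thmcorrespondenceAfull}. I expect no serious obstacle here: the entire content sits in the two previously established bijections, and the only things to be careful about are that positive definiteness of the intermediate tensors is precisely what legitimises both the volume-element constructions and the positivity of the sectional curvature, and that $D$ and $F$ are intrinsic to the two-tensor so that the two halves of the correspondence dovetail exactly.
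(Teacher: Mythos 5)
Your proposal is correct and is exactly the paper's argument: the paper's proof of Theorem \ref{thmcorrespondenceasalgorithm} consists of the single line ``Combine Lemma \ref{lemacurvkil} and Theorem \ref{thmcorrespondenceAfull} (for $U=\mathbb{S}^n$)'', and you have simply written out the routine verifications (well-definedness, positivity of $sec_{R_g}$, and the mutual-inverse bookkeeping via item $iii)$ of Theorem \ref{thmcorrespondenceAfull}) that this combination entails.
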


\begin{proof}
	Combine Lemma \ref{lemacurvkil} and Theorem \ref{thmcorrespondenceAfull} (for $U=\mathbb{S}^n$).
\end{proof}

\begin{rmk}
	From the explicit formula for the correspondence, it is possible to check that the metrics on $\mathbb{S}^n$ with respect to which all equators are minimal are actually analytic.
\end{rmk}

	Let us explore the algorithm by computing some simple examples.
	
\begin{exam}
	The correspondence clearly maps the canonical metric $\overline{g}$ into the unique algebraic curvature tensor with constant sectional curvature one, namely, $R(x,y,z,w)=\langle x,z\rangle\langle y,w\rangle-\langle x,w\rangle\langle x,z\rangle$. It also sends scalings of $R$ into scalings of $g$ (but the scaling factor is affected by a dimensional power).
\end{exam}

\begin{exam}	
	The next simplest algebraic curvature tensor coming from a compact symmetric space is the curvature tensor $R$ of $\mathbb{CP}^m$, $m\geq 2$. Let $J:\mathbb{R}^{2m+2}\rightarrow\mathbb{R}^{2m+2}$ corresponds to multiplication by the imaginary number $i$ under the natural identification 
	\begin{equation*}
		(x_1,y_1,\ldots,x_{m+1},y_{m+1}) \in \mathbb{R}^{2m+2}\sim (x_1+iy_1,\ldots,x_{m+1}+iy_{m+1})\in \mathbb{C}^{m+1}.
	\end{equation*}
	Then 
	\begin{align*}
		R(x,y,z,w) = & \langle x,z\rangle \langle y,w\rangle - \langle x,w\rangle \langle y,z\rangle \\ & + \langle Jx,z\rangle \langle Jy, w\rangle - \langle Jx,w\rangle \langle Jy,z\rangle + 2\langle Jx,y\rangle \langle Jz,w\rangle.
	\end{align*}
	For all $p\in \mathbb{S}^{2m+1}\subset \mathbb{R}^{2m+2}$ and all $v\in T_p\mathbb{S}^{2m+1}$, we have
	\begin{equation*}
		(k_R)_p(v,w) = \langle v,w\rangle + 3\langle Jp,v\rangle\langle Jp,w\rangle.
	\end{equation*}
	Notice that the sectional curvatures of $R$ can be any number in the interval $[1,4]$, as it is well-known.
	
	Choose $\{e_1=Jp,e_2,\ldots,e_{2m+1}\}\subset T_p\mathbb{S}^{2m+1}$ a $\overline{g}$-orthonormal basis. Notice that $(Jp)^{\perp}\cap T_{p}\mathbb{S}^{2m+1}$ is invariant under $J$. In this basis, $[(k_R)_p(e_i,e_j)]$ is a diagonal matrix with two eigenvalues, namely, $4$ with multiplicity one and $1$ with multiplicity $2m$. Therefore
	\begin{equation*}
		D_{k_R}(p) = \left(\sqrt{det[(k_R)_p(e_i,e_j)]}\right)^{\frac{4}{2m}} = 4^{\frac{1}{m}},
	\end{equation*}
	so that
	\begin{equation*}
		(g_{R})_p(X,Y) = \frac{1}{4^{\frac{1}{m}}}(\overline{g}(X,Y) + 3\overline{g}(Jp,X)\overline{g}(Jp,Y)).
	\end{equation*}
	Up to the scaling factor $4^{-1/m}$, we recognize this as a Berger deformation of the round metric, which scale the metric $\overline{g}$ just in the direction of the vector field that generates the Hopf action (that is, the vector field $p\mapsto Jp$) so that it has constant length equal to $2$.
	 
	We remark the peculiarity of the dimension $2m=4$: in this case, the scalar curvature of $g_R$ is zero! However, this is not the case in higher dimensions (\textit{Cf}. \cite{BetPic}, Proposition 4.2).
\end{exam}

\begin{exam}\label{exampleleftinvar}
	Let $g$ be a left-invariant metric on $\mathbb{S}^3$, the Lie group of unit quaternions. Let $L_i$, $L_j$ and $L_k$ denote the left-invariant vectors fields on $\mathbb{S}^3$ extending the imaginary quaternions $i$, $j$, $k\in T_1\mathbb{S}^3$. These vectors form an orthonormal frame Killing vector fields of $(\mathbb{S}^3,\overline{g})$, and we can write
	\begin{equation*}
		g = a\, L_i\odot L_i + b\, L_{j}\odot L_j + c\, L_k\odot L_k
	\end{equation*}
	for some positive constants $a$, $b$ and $c$. 
	
	It follows that $g$ is a positive definite Killing symmetric two-tensor itself. Since $dV_g/dV_{\overline{g}}=\sqrt{abc}$ is constant, the same can be said about the tensor $k_g=(1/F_g)g$. By Theorem \ref{propfundamentalequation}, $g=g_{k_g}$ is a metric with minimal equators.	
\end{exam}

\section{The actions} \label{secactions}

	Let $\tilde{\mathcal{G}}_n$ be the group of diffeomorphisms of $\mathbb{S}^n$ that map equators into equators. This group acts on the space $\mathcal{E}(\mathbb{S}^n)$ by pull-back, because, for every diffeomorphism $\phi$ of $\mathbb{S}^n$, a submanifold $\Sigma\subset \mathbb{S}^n$ is minimal with respect to a pulled back metric $\phi^*g$ if and only if the submanifold $\phi(\Sigma)$ is minimal with respect to $g$. In order to formulate questions about the moduli space of metrics on $\mathbb{S}^n$ with minimal equators, we need first to understand this group and this action.

\subsection{The $GL(n+1,\mathbb{R})$-action on $\mathcal{E}(\mathbb{S}^n)$}	
	Generalising the terminology we have been using so far, we define a \textit{$k$-equator} as the intersection of $\mathbb{S}^n$ with some $(k+1)$-dimensional linear subspace of $\mathbb{R}^{n+1}$, for every $k=0,\ldots,n-1$. Given an element of $T\in GL(n+1,\mathbb{R})$, that is, a linear invertible map $T:\mathbb{R}^{n+1}\rightarrow\mathbb{R}^{n+1}$, the map
\begin{equation}\label{eqphiT}
	\phi(T) : x\in \mathbb{S}^n \mapsto \frac{T(x)}{|T(x)|}\in \mathbb{S}^n
\end{equation}
defines an element of $\tilde{\mathcal{G}}_n$. 

	The map
\begin{equation} \label{eqhomomorphism2}
	T \in GL(n+1,\mathbb{R}) \mapsto \phi(T) \in \tilde{\mathcal{G}}_n
\end{equation} 
is a group homomorphism. The next proposition, which is a consequence of a classical algebraic result known as the \textit{Fundamental Theorem of Projective Geometry} (\textit{cf}. \cite{Art}, Chapter II, 10), shows that \eqref{eqhomomorphism2} is a surjective homomorphism.
\begin{prop} \label{propnaturalsymmetries}
	Let $\tilde{\mathcal{G}}_n$ be the group consisting of diffeomorphisms of the sphere $\mathbb{S}^n$ that permute $(n-1)$-equators. Every element of $\tilde{\mathcal{G}_n}$ is of the form $\phi(T)$ for some $T\in GL(n+1,\mathbb{R})$ as in \eqref{eqphiT}.
\end{prop}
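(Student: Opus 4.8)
The plan is to deduce Proposition \ref{propnaturalsymmetries} from the Fundamental Theorem of Projective Geometry by translating the statement into the language of projective spaces. First I would observe that the antipodal quotient $\pi\colon\mathbb{S}^n\to\mathbb{RP}^n$ identifies each $(n-1)$-equator $\Sigma_v$ with a projective hyperplane: indeed, $\pi(\Sigma_v)$ is exactly the set of lines lying in the hyperplane $v^\perp\subset\mathbb{R}^{n+1}$. More generally, $k$-equators descend to projective $k$-planes, and the incidence relations among them are preserved. Thus a diffeomorphism $\phi\in\tilde{\mathcal{G}}_n$ must first be checked to be \emph{antipodally symmetric}, i.e. to commute with the antipodal map: since $\phi$ permutes $0$-equators (pairs of antipodal points) and each $0$-equator is a single point of $\mathbb{RP}^n$, the pair $\{x,-x\}$ is sent to a pair $\{y,-y\}$, so $\phi(-x)=-\phi(x)$. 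Hence $\phi$ descends to a well-defined bijection $\bar{\phi}\colon\mathbb{RP}^n\to\mathbb{RP}^n$ carrying projective hyperplanes to projective hyperplanes, and (by considering intersections) carrying projective $k$-planes to projective $k$-planes for all $k$; in particular it preserves collinearity in both directions.

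Next I would invoke the Fundamental Theorem of Projective Geometry in the form stated in \cite{Art}, Chapter II, \S10: for $n\geq 2$, any bijection of $\mathbb{RP}^n$ that maps lines to lines (a \emph{collineation}) is induced by a semilinear automorphism of $\mathbb{R}^{n+1}$, that is, a map of the form $v\mapsto T(v)$ where $T$ is additive and $T(\lambda v)=\theta(\lambda)T(v)$ for some field automorphism $\theta$ of $\mathbb{R}$. Since $\mathbb{R}$ has no nontrivial field automorphisms (the only one is the identity, as order is definable from the field structure via squares), $\theta=\mathrm{id}$ and $T$ is genuinely linear and invertible, i.e. $T\in GL(n+1,\mathbb{R})$. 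Therefore $\bar{\phi}$ is the projective transformation induced by $T$.

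Finally I would lift this back to $\mathbb{S}^n$: the projective transformation induced by $T$ is covered by the diffeomorphism $x\mapsto T(x)/|T(x)|$, which is precisely $\phi(T)$ from \eqref{eqphiT}. Since $\phi$ and $\phi(T)$ induce the same map on $\mathbb{RP}^n$ and both commute with the antipodal map, they differ at each point only by a sign; continuity of $x\mapsto\phi(x)/\phi(T)(x)\in\{\pm 1\}$ on the connected sphere $\mathbb{S}^n$ forces this sign to be globally constant, and replacing $T$ by $-T$ if necessary (which yields the same $\phi(T)$) we conclude $\phi=\phi(T)$. The main obstacle — and the only nontrivial input — is the Fundamental Theorem of Projective Geometry itself; everything else is routine bookkeeping about the antipodal quotient and the triviality of $\mathrm{Aut}(\mathbb{R})$. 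A minor subtlety to handle with care is the hypothesis $n\geq 2$ needed for the Fundamental Theorem (for $n=1$ there are no lines to constrain $\bar\phi$), but this is already built into the setting since equators of $\mathbb{S}^1$ are just pairs of points.
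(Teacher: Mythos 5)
Your proposal is correct and follows essentially the same route as the paper: use intersections of equators in general position to see that $\phi$ permutes $k$-equators for all $k$, obtain a collineation of the lattice of proper subspaces of $\mathbb{R}^{n+1}$ (equivalently, of $\mathbb{RP}^n$), apply the Fundamental Theorem of Projective Geometry from \cite{Art}, and resolve the pointwise sign ambiguity between $\phi$ and $\phi(\pm T)$ by a continuity/connectedness argument. The only differences are cosmetic: you phrase the collineation on the antipodal quotient and make explicit the triviality of $\mathrm{Aut}(\mathbb{R})$, which the paper leaves implicit in its citation.
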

\begin{proof}
	For every non-negative integer $k<n$, any $k$-equator of $\mathbb{S}^n$ is the intersection of $n-k$ different $(n-1)$-equators in general position. Conversely, the intersection of $n-k$ different $(n-1)$-equators in general position defines a unique $k$-equator.  Hence, every diffeomorphim $\phi$ in $\tilde{\mathcal{G}}_n$ is a map that permutes $k$-equators, for every $k=0,\ldots,n-1$.
	
	It follows that the map that assigns to each proper vector subspace $V\subset \mathbb{R}^{n+1}$ the proper vector subspace generated by the $(dim(V)-1)$-equator $\phi(V\cap \mathbb{S}^n)$ is a collineation, in the sense that it permutes proper vector subspaces of $\mathbb{R}^{n+1}$ while preserving the partial order induced by inclusion. By the Fundamental Theorem of Projective Geometry, there exists $T\in GL(n+1,\mathbb{R})$ such that, for every proper vector subspace $V$ of $\mathbb{R}^{n+1}$, the subspace $T(V)$ is precisely the subspace generated by $\phi(V\cap \mathbb{S}^n)$.
	
	Specialising to one-dimensional subspaces, we conclude that, for every $x$ in $\mathbb{S}^n$, the point $\phi(x)$ in $\mathbb{S}^n$ must be either equal to $T(x)/|T(x)|$ or to $-T(x)/|T(x)|$. Since $T$ is a linear isomorphism, the continuous map $x\in \mathbb{S}^n \mapsto \langle T(x),\phi(x) \rangle \in \mathbb{R}$ never vanishes. Thus, either $\phi=\phi(T)$ or $\phi=\phi(-T)$ as in \eqref{eqphiT}, as we wanted to prove.
\end{proof}

	The kernel of the homomorphism \eqref{eqhomomorphism2}, on the other hand, is easier to determine. If $\phi(T)(x)=x$ for every $x\in \mathbb{R}^{n+1}$, then $T$ maps each unit vector into a positive multiple of itself. But the multiple needs to be the same. For, otherwise,  in the plane generated by two unit vectors $v$ and $w$ corresponding, to different eigenvalues of $T$ one would see $\phi(T)$ mapping $(v+w)/\sqrt{2}$ into a vector that is not a positive multiple of itself. Thus, the kernel of the homomorphism \eqref{eqhomomorphism2} consists of the maps $\lambda Id$, where $\lambda>0$. 
	
	Collecting these facts together, we conclude that the map \eqref{eqhomomorphism2} induces an isomorphism 
\begin{equation*}
	\tilde{\mathcal{G}_n} \simeq GL(n+1,\mathbb{R})/\{\lambda Id\,|\,\lambda >0\},
\end{equation*}
and that the group $GL(n+1,\mathbb{R})$ acts on $\mathcal{E}(\mathbb{S}^n)$ by the rule
\begin{equation}\label{eqactionpullcack}
	g\cdot T = \phi(T)^{*}g.
\end{equation}

	It is interesting to remark that $\phi(-T)$ and $\phi(T)$ induce the same permutation in the set of equators. At this point, however, it is not clear whether $T$ and $-T$ act on $\mathcal{E}(\mathbb{S}^n)$ in the same way or not. This question will be settled at the end of this Section (see Corollary \ref{corantipodalsymmetry}). 

\subsection{The $GL(n+1,\mathbb{R})$-action on $\mathcal{K}_2(\mathbb{S}^n,\overline{g})$} \label{subsecactionkilling} Let $T$ be an element of the general linear group $GL(n+1,\mathbb{R})$ and denote by $T^\top$ its transpose. If $V\in \mathfrak{so}(n+1,\mathbb{R})$ is a skew-symmetric matrix, it is immediate to check that $T^{\top}VT$ is also a skew-symmetric matrix. Taking the identification between Killing vector fields of $(\mathbb{S}^n,\overline{g})$ and skew-symmetric matrices into account, we can define the right action of $T\in GL(n+1,\mathbb{R})$ on a Killing vector field $K$ of $(\mathbb{S}^n,\overline{g})$ by the rule
\begin{equation*}
K \cdot T = \frac{1}{det(T)^{\frac{2}{n+1}}}T^{\top}KT.
\end{equation*} 

	This linear action extends naturally to the spaces of Killing symmetric tensor fields. It is enough to define it on a generating set, for instance, on the set of elements of the form $K\odot L \in \mathcal{K}_{2}(\mathbb{S}^n,\overline{g})$ as in \eqref{eqsymmetricproduct}, where $K$ and $L$ are Killing vector fields of $(\mathbb{S}^n,\overline{g})$. Then
	\begin{equation} \label{eqactionkilling}
		(K\odot L) \cdot T = \frac{1}{det(T)^{\frac{4}{n+1}}} (K \cdot T)\odot (L \cdot T),
	\end{equation} 
	Notice that the subset $K_2^{+}(\mathbb{S}^n,\overline{g})$ is invariant by this action.
		
	It is straightforward to check that, for a linear orthogonal transformation $T\in O(n+1,\mathbb{R})\subset GL(n+1,\mathbb{R})$, the action described above coincides with the standard action of $T$, regarded as an isometry of $(\mathbb{S}^n,\overline{g})$, by the pull-back operation on the space $\mathcal{K}_2(\mathbb{S}^n,\overline{g})$.

\subsection{The $GL(n+1,\mathbb{R})$-action on $Curv(\mathbb{R}^{n+1})$} Let $T$ be an element of the general linear group $GL(n+1,\mathbb{R})$. Given $R\in Curv(\mathbb{R}^{n+1})$, we define the four-linear map $R\cdot T$ on $\mathbb{R}^{n+1}$ by
\begin{equation}\label{eqactioncurvature}
	R\cdot T(x,y,z,w) = \frac{1}{det(T)^{\frac{4}{n+1}}}R(Tx,Ty,Tz,Tw).
\end{equation}
	It is immediate to check that this is a well-defined operation that defines an action of $GL(n+1,\mathbb{R})$ on $Curv(\mathbb{R}^{n+1})$ that extends the standard action of $O(n+1,\mathbb{R})$. Also, it is clear that the subset $Curv_{+}(\mathbb{R}^{n+1})$ is preserved by this action.

\subsection{Equivariance} \label{subsecequivar} We show next that the $GL(n+1,\mathbb{R})$-actions on $\mathcal{E}(\mathbb{S}^n)$, $\mathcal{K}_2^+(\mathbb{S}^n,\overline{g})$ and $Curv_+(\mathbb{R}^{n+1})$ described in the previous subsections (see \eqref{eqactionpullcack}, \eqref{eqactionkilling} and \eqref{eqactioncurvature}) are intertwined by the maps between these spaces defined in Section \ref{seccorrespondence}. 
	
	The map $R\in Curv(\mathbb{R}^{n+1})\mapsto k_R\in \mathcal{K}_2(\mathbb{S}^n,\overline{g})$ defined in Lemma \ref{lemacurvkil} certainly enjoys this property. Thus, the key fact to be proven is the following statement:

\begin{thm} \label{thmequivariance}
	The correspondence between Riemannian metrics on $\mathbb{S}^n$ with respect to which all equators are minimal and positive definite Killing symmetric two-tensors of $(\mathbb{S}^n,\overline{g})$ described in Theorem \ref{thmcorrespondenceAfull} is equivariant with respect to the $GL(n+1,\mathbb{R})$-actions in both spaces.
\end{thm}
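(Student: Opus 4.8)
The plan is to verify equivariance directly on the explicit formula for the correspondence, using the multiplicative structure of the volume-normalization factors. Fix $T \in GL(n+1,\mathbb{R})$, write $\phi = \phi(T)$ for the associated diffeomorphism of $\mathbb{S}^n$ as in \eqref{eqphiT}, and let $g$ be a metric in $\mathcal{E}(\mathbb{S}^n)$. By Theorem \ref{thmcorrespondenceAfull}, it suffices to show that $k_{\phi^*g}$, the positive definite Killing two-tensor attached to the pulled-back metric $\phi^*g$, equals $k_g \cdot T$ under the action \eqref{eqactionkilling}; the statement for the inverse map then follows formally since the maps are mutual inverses. Recall $k_g = (1/F_g)g$, where $dV_g = F_g^{(n+1)/4}\,dV_{\overline{g}}$.

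The first step is a clean bookkeeping of how $\phi^*$ interacts with the two reference structures $\overline{g}$ and the volume form. The point is that $\phi(T)$ is \emph{not} an isometry of $(\mathbb{S}^n,\overline{g})$ in general, so $\phi^*\overline{g} \neq \overline{g}$; however, $\phi$ does permute equators, hence $\phi^*$ preserves $\mathcal{E}(\mathbb{S}^n)$, so $\phi^*g$ is again a metric with minimal equators and the construction of Theorem \ref{thmcorrespondenceAfull} applies to it. I would compute the conformal-type factor relating $dV_{\phi^*\overline{g}}$ to $dV_{\overline{g}}$: pulling back, $dV_{\phi^*g} = \phi^*(dV_g) = \phi^*(F_g^{(n+1)/4}\,dV_{\overline{g}}) = (F_g\circ\phi)^{(n+1)/4}\,\phi^*(dV_{\overline{g}})$, and then one needs $\phi^*(dV_{\overline{g}}) = J_\phi\, dV_{\overline{g}}$ where $J_\phi = |\det(d\phi)|$ measured in $\overline{g}$. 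A direct computation from $\phi(x) = Tx/|Tx|$ gives $J_\phi(x) = \det(T)/|Tx|^{n+1}$ up to sign (this is the standard Jacobian of a projective transformation of the sphere), and in particular the normalizing powers are arranged precisely so that the factor $\det(T)^{4/(n+1)}$ appearing in \eqref{eqactionkilling} emerges. Thus $F_{\phi^*g} = (F_g\circ\phi)\cdot J_\phi^{4/(n+1)} = (F_g\circ\phi)\cdot \det(T)^{4/(n+1)}/|T\cdot|^{4}$, so that
\begin{equation*}
	k_{\phi^*g} = \frac{1}{F_{\phi^*g}}\phi^*g = \frac{|T\cdot|^{4}}{\det(T)^{4/(n+1)}}\cdot\frac{1}{F_g\circ\phi}\,\phi^*g = \frac{|T\cdot|^{4}}{\det(T)^{4/(n+1)}}\,\phi^*(k_g).
\end{equation*}

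The second step is to identify the right-hand side with $k_g\cdot T$. For this I would reduce to generators: every element of $\mathcal{K}_2(\mathbb{S}^n,\overline{g})$ is a sum of terms $K\odot L$ with $K,L$ Killing, corresponding to skew-symmetric matrices $V,W$ via $K(p)=Vp$. The computation then becomes the pointwise linear-algebra identity that, for $K(p)=Vp$,
\begin{equation*}
	\big(\phi^* (K\odot L)\big)_p = |Tp|^{-4}\,\big((T^\top V T)p\big)\odot\big((T^\top W T)p\big)
\end{equation*}
after the tangential projections are accounted for — the factor $|Tp|^{-4}$ from $d\phi$ cancels exactly the $|T\cdot|^4$ above, and the $\det(T)$-powers match \eqref{eqactionkilling}. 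The main obstacle is precisely this last identity: one must track carefully how $d\phi_p$ acts on $T_p\mathbb{S}^n$ (it is $|Tp|^{-1}$ times the composition of $T$ with the orthogonal projection onto $T_{\phi(p)}\mathbb{S}^n$), and verify that the $\overline{g}$-inner products defining $K\odot L$ transform so that the projection terms disappear after symmetrization — this is where the skew-symmetry of $V,W$ is used, together with the fact that $K\odot L$ already ``lives on the sphere.'' Once this is checked on generators it extends by linearity, giving $k_{\phi^*g} = k_g\cdot T$; combining with Lemma \ref{lemacurvkil} (which is $GL(n+1,\mathbb{R})$-equivariant by inspection of \eqref{eqactioncurvature} and the definition \eqref{equation.definitionkr}) then yields the full equivariance of the correspondence $\mathcal{E}(\mathbb{S}^n) \leftrightarrow Curv_+(\mathbb{R}^{n+1})$ as well.
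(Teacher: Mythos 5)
Your proposal is correct and follows essentially the same route as the paper: reduce to the identity $k_{\phi^*g}=k_g\cdot T$, compute the Jacobian $\phi(T)^*dV_{\overline g}=\frac{\det(T)}{|T\cdot|^{n+1}}dV_{\overline g}$ to relate $F_{\phi^*g}$ to $F_g\circ\phi$, and then verify the transformation law $(\phi^*(K\odot L))_p=|Tp|^{-4}\,(T^\top VT p)\odot(T^\top WT p)$ (up to the $\det(T)$ normalization) on the generators $K\odot L$ of $\mathcal{K}_2(\mathbb{S}^n,\overline g)$, which is exactly the paper's Claim 1 and Claim 2. The only difference is that you leave the generator computation sketched rather than carried out, but the mechanism you identify (skew-symmetry forcing $Vp\perp p$ so the projection terms drop) is precisely what makes it work.
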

\begin{proof}
	Let $T\in GL(n+1,\mathbb{R})$, and set $\phi:=\phi(T)$ as in \eqref{eqphiT}. If $g$ is a Riemannian metric on $\mathbb{S}^n$ with respect to which all equators are minimal, then the pull-back metric $\phi^{*}g$ has also the same property. According to Theorem \ref{thmcorrespondenceAfull}, 
\begin{equation*}
	g=F_gk_g \quad \text{and} \quad  \phi^{*}g= F_{\phi^{*}g}k_{\phi^{*}g}, 
\end{equation*}
where
\begin{equation*}
	dV_{g} = (F_{g})^{\frac{n+1}{4}}dV_{\overline{g}}, \quad dV_{\phi^{*}g} = (F_{\phi^{*}g})^{\frac{n+1}{4}}dV_{\overline{g}},
\end{equation*} 
and both $k_g$, $k_{\phi^{*}g}$ belong to $\mathcal{K}^+_2(\mathbb{S}^n,\overline{g})$. All we need to show is that
	\begin{equation} \label{eqmainclaim}
		k_{\phi^{*}g} = k_g \cdot T,
	\end{equation}
where the right-hand side is defined by the rule \eqref{eqactionkilling}.

	Let $\delta(T)$ be the positive function on $\mathbb{S}^n$ uniquely defined by the requirement that $\phi(T)^{*}dV_{\overline{g}} = \delta(T)^{\frac{n+1}{4}}dV_{\overline{g}}$. Then, 
\begin{multline*}
	(F_{\phi^{*}g})^{\frac{n+1}{4}}dV_{\overline{g}}=dV_{\phi^{*}g} = \phi^{*}(dV_{g}) = \phi^*(F^{\frac{n+1}{4}}dV_{\overline{g}})\\ = (\phi^*F_g)^{\frac{n+1}{4}} \phi^{*}dV_{\overline{g}} = ((\phi^{*}F_{g})\delta(T))^{\frac{n+1}{4}} dV_{\overline{g}}.
\end{multline*} 
\noindent Hence, $F_{\phi*g}=(\phi^*F_g)\delta(T)$ and
\begin{equation} \label{eqpullback}
	k_{\phi^{*}g} = \frac{1}{F_{\phi^{*}g}}\phi^{*}g = \frac{1}{\delta(T)}\phi^{*}\left(\frac{1}{F_g}g\right) = \frac{1}{\delta(T)}\phi(T)^{*}k_{g}.
\end{equation}
\noindent \textbf{Claim 1}: $\delta(T)(p) = det(T)^{\frac{4}{n+1}}/|Tp|^4$ for every $p\in \mathbb{S}^n$. \\

\indent In fact, for every $p\in \mathbb{S}^n$ and $v\in T_p \mathbb{S}^n$,
\begin{equation*}
	d\phi(T)(p)v = \frac{1}{|Tp|}\left(Tv - \frac{\langle Tv,Tp \rangle}{|Tp|^2}Tp\right).
\end{equation*}
Let $dV_{euc}$ be the volume element of $\mathbb{R}^{n+1}$. Since $dV_{\overline{g}}(p)(v_1,\ldots,v_n)=dV_{euc}(p,v_1,\ldots,v_n)$ for all $p \in \mathbb{S}^n$ and $v_1,\ldots, v_n\in T_{p}\mathbb{S}^n$, we have
\begin{align*}
	(\phi(T)^{*}dV_{\overline{g}})(p)(v_1,\ldots,v_n) = & \\ 
& \hspace{-3cm}= dV_{euc}\left(\phi(T)(p),d\phi(T)(p)v_1,\ldots,d\phi(T)(p)v_n\right) \\ & \hspace{-3cm}= \frac{1}{|Tp|^{n+1}}dV_{euc}(Tp, Tv_{1}, \ldots, Tv_{n}) \\ 
& \hspace{-3cm}= \frac{det(T)}{|Tp|^{n+1}}dV_{euc}(p,v_1,\ldots,v_n) \\ & \hspace{-3cm}= \frac{det(T)}{|Tp|^{n+1}}dV_{\overline{g}}(p)(v_1,\ldots,v_n) .
\end{align*}
The claim follows. \\

\noindent \textbf{Claim 2}: For every $p \in \mathbb{S}^n$ and $k\in \mathcal{K}_{2}(\mathbb{S}^n,\overline{g})$,
\begin{equation*}
	(\phi(T)^{*}k)(p) = \frac{det(T)^{\frac{4}{n+1}}}{|Tp|^{4}} (k \cdot T)(p).
\end{equation*}

\indent It is enough to prove this claim for Killing symmetric two-tensors of the form $k=K\odot L$, $K, L\in \mathfrak{so}(n+1,\mathbb{R})$. Observing that $K(x)$, $L(x)\in T_x\mathbb{S}^n$ are orthogonal to $x \in S^{n}$, we compute, for every $p\in \mathbb{S}^n$ and $v,w\in T_{p}\mathbb{S}^n$,
\begin{align*}
	(\phi(T)^{*}(K\odot L))_p(v,w) = &\\ & \hspace{-3.3cm}=\left\langle K\left(\frac{Tp}{|Tp|}\right),d\phi(T)(p)v\right\rangle \left\langle L\left(\frac{Tp}{|Tp|}\right),d\phi(T)(p)w\right\rangle + \ldots \\ & \hspace{-3.3cm}= \left\langle K\left(\frac{Tp}{|Tp|}\right),\frac{Tv}{|Tp|}\right\rangle \left\langle L\left(\frac{Tp}{|Tp|}\right),\frac{Tw}{|Tp|}\right\rangle + \ldots \\
	& \hspace{-3.3cm}= \frac{1}{|Tp|^4} \left\langle T^{\top}K Tp,v\right\rangle \left\langle T^{\top}L Tp,w\right\rangle + \ldots \\ & \hspace{-3.3cm}= \frac{det(T)^{\frac{4}{n+1}}}{|Tp|^{4}} ((K\odot L)\cdot T)_p(v,w).
\end{align*}
(In the above computation, we hid the terms involving the permutation of $K$ and $L$ under the dots). The claim follows.  \\

Combing both Claims with equation \eqref{eqpullback}, we verify the validity of \eqref{eqmainclaim}, and the result follows.
\end{proof}

	The equivariance of the correspondence has the following immediate consequence:
\begin{cor} \label{corisometrygroup}
 	Let $g\in \mathcal{E}(\mathbb{S}^n)$ and $R\in Curv_+(\mathbb{R}^{n+1})$ be related by the maps defined in Theorem  \ref{thmcorrespondenceasalgorithm}. Let $T\in GL(n+1,\mathbb{R})$. Then, $R\cdot T=R$ if and only if the map $\phi(T)$ as in \eqref{eqphiT} is an isometry of $(\mathbb{S}^n,g)$.
\end{cor}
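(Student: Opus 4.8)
The plan is to derive the Corollary formally from the $GL(n+1,\mathbb{R})$-equivariance of the correspondence together with the fact that it is a bijection, so that almost no new computation is needed. Recall that the map $g\in\mathcal{E}(\mathbb{S}^n)\mapsto R_g\in Curv_+(\mathbb{R}^{n+1})$ of Theorem \ref{thmcorrespondenceasalgorithm} factors as $g\mapsto k_g\mapsto R_g$, where $g\mapsto k_g$ is the inverse of the bijection of Theorem \ref{thmcorrespondenceAfull} and $k\mapsto R$ is the inverse of the isomorphism of Lemma \ref{lemacurvkil}. By Theorem \ref{thmequivariance} the first factor intertwines the pull-back action \eqref{eqactionpullcack} on $\mathcal{E}(\mathbb{S}^n)$ with the action \eqref{eqactionkilling} on $\mathcal{K}_{2}^{+}(\mathbb{S}^n,\overline{g})$, and, by the observation made just before Theorem \ref{thmequivariance}, the map $R\mapsto k_R$ intertwines the action \eqref{eqactioncurvature} on $Curv_+(\mathbb{R}^{n+1})$ with \eqref{eqactionkilling}, hence so does its inverse. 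Composing, one obtains that $g\mapsto R_g$ is $GL(n+1,\mathbb{R})$-equivariant, that is, $R_{g\cdot T}=R_g\cdot T$ for all $g\in\mathcal{E}(\mathbb{S}^n)$ and all $T\in GL(n+1,\mathbb{R})$.

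With this in hand, I would finish as follows. Fix $g\in\mathcal{E}(\mathbb{S}^n)$, let $R=R_g$ (so that also $g=g_R$), and let $T\in GL(n+1,\mathbb{R})$. Equivariance gives $R_{g\cdot T}=R\cdot T$, and since $g\mapsto R_g$ is a bijection it is in particular injective, so $R\cdot T=R$ holds if and only if $g\cdot T=g$. By the definition \eqref{eqactionpullcack} of the action, $g\cdot T=\phi(T)^{*}g$, so $g\cdot T=g$ is exactly the assertion that $\phi(T)^{*}g=g$, that is, that the diffeomorphism $\phi(T)$ of $\mathbb{S}^n$ from \eqref{eqphiT} is an isometry of $(\mathbb{S}^n,g)$. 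Both implications of the Corollary follow at once.

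I do not anticipate a real obstacle once Theorem \ref{thmequivariance} is granted; the only step that needs care is the bookkeeping in the first paragraph — checking that the two composed maps are equivariant for the \emph{same} action \eqref{eqactionkilling} on the intermediate space, so that the composition carries no residual conformal factor, which is precisely why the normalising powers of $\det(T)$ in \eqref{eqactionkilling} and \eqref{eqactioncurvature} are calibrated the way they are. A slightly more direct route, avoiding the composition, is to argue at the level of $k$: Theorem \ref{thmequivariance} already gives that $\phi(T)$ is an isometry of $(\mathbb{S}^n,g)$ if and only if $k_g\cdot T=k_g$, and one then transports this equivalence through the equivariant isomorphism of Lemma \ref{lemacurvkil} to conclude that it is also equivalent to $R\cdot T=R$.
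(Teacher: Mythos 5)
Your argument is correct and is essentially the paper's own: the corollary is deduced as an immediate consequence of the equivariance $R_{g\cdot T}=R_g\cdot T$ (from Theorem \ref{thmequivariance} together with the equivariance of $R\mapsto k_R$) and the bijectivity of the correspondence, giving $R\cdot T=R$ if and only if $\phi(T)^{*}g=g$. The paper states this in one line; your version just makes the bookkeeping explicit.
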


\begin{proof}
	In fact, $R\cdot T=R$ if and only if $\phi(T)^*g=g$.
\end{proof}
	
	As an application, we obtain:
	
\begin{cor} \label{corantipodalsymmetry}
	The antipodal map is an isometry of every Riemannian metric on the sphere $\mathbb{S}^n$ with respect to which all equators are minimal. 
\end{cor}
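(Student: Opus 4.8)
The plan is to recognise the antipodal map as $\phi(T)$ for a well-chosen $T\in GL(n+1,\mathbb{R})$ and then feed this into Corollary \ref{corisometrygroup}. Concretely, I would take $T=-\mathrm{Id}$: then the formula \eqref{eqphiT} gives $\phi(-\mathrm{Id})(x)=-x/|{-x}|=-x$, so $\phi(-\mathrm{Id})$ is precisely the antipodal map of $\mathbb{S}^n$. Fix $g\in\mathcal{E}(\mathbb{S}^n)$ and let $R\in Curv_{+}(\mathbb{R}^{n+1})$ be the algebraic curvature tensor corresponding to $g$ under Theorem \ref{thmcorrespondenceasalgorithm}. By Corollary \ref{corisometrygroup}, $\phi(-\mathrm{Id})$ is an isometry of $(\mathbb{S}^n,g)$ if and only if $R\cdot(-\mathrm{Id})=R$.

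So the one thing left to verify is that \emph{every} algebraic curvature tensor is fixed by the action of $-\mathrm{Id}$ defined in \eqref{eqactioncurvature}. This is an immediate computation: for any $x,y,z,w\in\mathbb{R}^{n+1}$, four-linearity of $R$ gives
\begin{equation*}
  R\cdot(-\mathrm{Id})(x,y,z,w)=\frac{1}{\det(-\mathrm{Id})^{4/(n+1)}}\,R(-x,-y,-z,-w)=\frac{(-1)^4}{1}\,R(x,y,z,w)=R(x,y,z,w),
\end{equation*}
since $\det(-\mathrm{Id})^{4/(n+1)}=\big((-1)^{n+1}\big)^{4/(n+1)}=1$ (the exponent $4/(n+1)$ is to be read so that $\det(A)^{4/(n+1)}=|\det A|^{4/(n+1)}$, which is exactly what makes \eqref{eqactioncurvature} a genuine action). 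Hence $R\cdot(-\mathrm{Id})=R$, and the conclusion holds for every $g\in\mathcal{E}(\mathbb{S}^n)$.

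There is essentially no obstacle: the statement is a formal consequence of the equivariance packaged in Corollary \ref{corisometrygroup}, and the only point needing a touch of care is the scalar factor $\det(-\mathrm{Id})^{4/(n+1)}$ when $n+1$ is odd, where $\det(-\mathrm{Id})=-1$ — but the even numerator of the exponent makes this factor $+1$, just as the four sign changes in the entries of $R$ cancel. Alternatively, one could run the argument on the side of Killing tensors: since $(-\mathrm{Id})^{\top}V(-\mathrm{Id})=V$ for every $V\in\mathfrak{so}(n+1,\mathbb{R})$, the action \eqref{eqactionkilling} fixes every $k\in\mathcal{K}_2(\mathbb{S}^n,\overline{g})$, and then the equivariance of Theorem \ref{thmequivariance} (together with the obvious equivariance of $R\mapsto k_R$) gives the claim through Theorem \ref{thmcorrespondenceAfull}; but the curvature-tensor version above is the most transparent.
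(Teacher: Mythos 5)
Your proof is correct and follows essentially the same route as the paper: identify the antipodal map as $\phi(-\mathrm{Id})$, observe that four-linearity of $R$ together with $\det(-\mathrm{Id})=(-1)^{n+1}$ forces $R\cdot(-\mathrm{Id})=R$, and conclude via Corollary \ref{corisometrygroup}. Your extra care about reading $\det(T)^{4/(n+1)}$ as $((\det T)^4)^{1/(n+1)}$ when $n+1$ is odd is a sensible clarification, but the argument is the paper's own.
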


\begin{proof}
	Notice that the antipodal map is simply the map $A=\phi(-Id) : \mathbb{S}^n \rightarrow \mathbb{S}^n$. Since $det(-Id)=(-1)^{n+1}$ and every $R\in Curv(\mathbb{R}^{n+1})$ is four-linear, we have $R\cdot (-Id)=R$ for every $R\in Curv(\mathbb{R}^{n+1})$. By Corollary \ref{corisometrygroup}, every $g\in \mathcal{E}(\mathbb{S}^n)$ is such that $A^*g=g$.
\end{proof}

	In particular, for every $g\in \mathcal{E}(\mathbb{S}^n)$ and every $T\in GL(n+1,\mathbb{R})$, we have $\phi(-T)^*g=\phi(T)^*A^*g=\phi(T)^*g$. Hence, the group 
	\begin{equation*}
		\mathcal{G}_n=\tilde{\mathcal{G}}_n/\{\pm Id\}\simeq GL(n+1,\mathbb{R})/\{\lambda Id\,|\,\lambda \neq 0\} = PGL(n+1,\mathbb{R})
	\end{equation*}
	has a well-defined action on $\mathcal{E}(\mathbb{S}^n)$, given by $g\cdot [T]=\phi(T)^*g$ for every $[T]\in PGL(n+1,\mathbb{R})$ and every $g\in \mathcal{E}(\mathbb{S}^n)$.
	
	It is interesting to notice that different elements of $\mathcal{G}_n$ induce different permutations in the set of $(n-1)$-equators.
	
	It is also immediate to check that the $GL(n+1,\mathbb{R}^n)$-action on the spaces $\mathcal{K}_2(\mathbb{S}^n,\overline{g})$ and $Curv(\mathbb{R}^n)$ induce a $PGL(n+1,\mathbb{R}^n)$-action as well, because $\lambda T$ acts in the same way as $T$ for every $\lambda\neq 0$ and every $T\in GL(n+1,\mathbb{R})$. 
	
	Now that we have defined the action of $PGL(n+1,\mathbb{R})$ on each space $\mathcal{E}(\mathbb{S}^n)$ and $Curv_{+}(\mathbb{R}^{n+1})$, and checked, using Theorem \ref{thmequivariance}, its equivariance under the correspondence described in Theorem \ref{thmcorrespondenceasalgorithm}, the proof of the Classification Theorem, as stated in the Introduction, is finished.
	
\begin{rmk}\label{rmkrestrictequivariance}
	Equivariance assertions similar to those presented in this Section are true in the more general context of Theorem \ref{thmcorrespondenceAfull}, and can be proven by the same arguments. Namely, one just needs to consider the action of the subgroup of $GL(n+1,\mathbb{R}^n)$ consisting of those maps $T$ such that the diffeomorphism $\phi(T)$ as in \eqref{eqphiT} maps a given open subset $U\subset \mathbb{S}^n$ into itself. A similar observation can be made about Hangan's correspondence (Theorem 2 in \cite{Han1}). 
\end{rmk}

\begin{rmk}\label{rmkprojective}
	Let $\mathbb{RP}^n$ denote the $n$-dimensional real projective space obtained as the quotient of $\mathbb{S}^n$ by the antipodal map. The quotient of a $k$-equator of $\mathbb{S}^n$ is an embedded copy of $\mathbb{RP}^{k}$ in $\mathbb{RP}^n$ that we call a \textit{linear projective $k$-plane}. A consequence of Corollary \ref{corantipodalsymmetry} is that the pull-back by the standard projection $\pi : \mathbb{S}^n \rightarrow \mathbb{RP}^n$ defines a bijective correspondence between the set of Riemannian metrics on $\mathbb{RP}^n$ with respect to which all linear projective $(n-1)$-planes are minimal and the set $\mathcal{E}(\mathbb{S}^n)$. We explore the geometry of these metrics on $\mathbb{RP}^n$ a bit further in Remark \ref{rmkprojective2}.
\end{rmk}

\section{Three-dimensions} \label{sec3d}

\subsection{Uniqueness} \label{subsecuniq} The set of \textit{oriented} equators of $\mathbb{S}^3$ is a set of oriented smoothly embedded two-spheres of $\mathbb{S}^3$ that is para\-me\-tri\-sed by $\mathbb{S}^3$ itself, via the smooth map $v\mapsto \Sigma_v$. Given any point $p\in \mathbb{S}^3$ and any (oriented) two-dimensional linear subspace $\pi\subset T_p\mathbb{S}^3$, there exists a unique (oriented) equator $\Sigma_{(p,\pi)}\subset \mathbb{S}^3$ that passes through $p$ and is such that $\pi=T_p\Sigma_{(p,\pi)}$. Moreover, the map $(p,\pi) \mapsto \Sigma_{(p,\pi)}$ is smooth.

	Thus, on any $(\mathbb{S}^3,g)$ with minimal equators, the set of oriented equators constitute a \textit{transitive family} of oriented embedded minimal two-spheres in the sense of J. G\'alvez and P. Mira (see Definition 2.1 in \cite{GalMir}). We can therefore apply their Uniqueness Theorem (\cite{GalMir}, Theorem 2.3) to derive the following strong conclusion:

\begin{thm} \label{thmuniqueness}
	Let $g$ be a Riemannian metric on the sphere $\mathbb{S}^{3}$ with respect to which all two-equators are minimal. Any immersed minimal two-sphere in $(\mathbb{S}^3,g)$ must be an equator.
\end{thm}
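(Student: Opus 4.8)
The plan is to deduce Theorem \ref{thmuniqueness} as a direct application of the Uniqueness Theorem of G\'alvez and Mira (\cite{GalMir}, Theorem 2.3), so the real work is to verify that the oriented equators of $(\mathbb{S}^3,g)$ form a \emph{transitive family} of oriented embedded minimal two-spheres in the precise sense of their Definition 2.1. First I would recall what must be checked: (a) the equators are a smoothly parametrised family of oriented embedded two-spheres; (b) they are all minimal in $(\mathbb{S}^3,g)$; (c) the family is transitive, meaning that for every point $p\in\mathbb{S}^3$ and every oriented two-plane $\pi\subset T_p\mathbb{S}^3$ there is a member of the family through $p$ with oriented tangent plane $\pi$, and this member depends smoothly on $(p,\pi)$.

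Property (b) is exactly the hypothesis that $g\in\mathcal{E}(\mathbb{S}^3)$. For (a) and (c) I would argue purely at the level of the \emph{geometry of equators}, which is independent of $g$: the oriented equators are the slices $\Sigma_v=\{x\in\mathbb{S}^3:\langle x,v\rangle=0\}$ with $v\in\mathbb{S}^3$, oriented by the conormal $v$, and $v\mapsto\Sigma_v$ is a smooth embedding of $\mathbb{S}^3$ into the space of oriented embedded two-spheres. Each such $\Sigma_v$ is diffeomorphic to $S^2$ and embedded. For transitivity, given $p$ and an oriented two-plane $\pi\subset T_p\mathbb{S}^3=p^\perp\subset\mathbb{R}^4$, there is a unique unit vector $v$ that is Euclidean-orthogonal to $p$ and to $\pi$ and induces the given orientation; then $p\in\Sigma_v$, $T_p\Sigma_v=\pi$ as oriented planes, and the assignment $(p,\pi)\mapsto v$, hence $(p,\pi)\mapsto\Sigma_{(p,\pi)}=\Sigma_v$, is manifestly smooth. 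This is already recorded in the paragraph preceding the theorem, so I would simply invoke it. With (a), (b), (c) in hand, the family $\{\Sigma_v\}_{v\in\mathbb{S}^3}$ satisfies the hypotheses of \cite{GalMir}, Theorem 2.3, whose conclusion is precisely that any immersed minimal two-sphere in the ambient manifold coincides with a member of the transitive family. Therefore every immersed minimal two-sphere in $(\mathbb{S}^3,g)$ is an equator.

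I expect the only genuine subtlety to be a careful matching of conventions: one must make sure that ``transitive family'' in \cite{GalMir} is the condition on the universal dependence of the family on a point and an (oriented) tangent plane that we verified above, rather than a \emph{strictly} larger requirement such as completeness, properness, or an index/stability hypothesis on the family members. A secondary point is orientability: equators of $\mathbb{S}^3$ are two-spheres, hence automatically orientable and simply connected, so no obstruction arises there; and the parametrising space, $\mathbb{S}^3$ (equivalently, the Grassmann-type bundle of oriented tangent two-planes, which for $n=3$ is also a copy of $\mathbb{S}^3$), is the natural domain expected by their theorem. Once these bookkeeping matters are settled, the proof is immediate, so the bulk of the exposition should be the verification of the transitive-family axioms rather than any new analysis.
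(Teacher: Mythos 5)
Your proposal is correct and follows essentially the same route as the paper: verify that the oriented equators form a transitive family of oriented embedded minimal two-spheres in the sense of G\'alvez--Mira's Definition 2.1 (smooth parametrisation by $v\in\mathbb{S}^3$, minimality from the hypothesis, and the unique equator through each point tangent to each oriented two-plane), and then invoke their Uniqueness Theorem 2.3. The paper presents exactly this argument in the paragraph preceding the statement, so nothing further is needed.
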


	This statement cannot be generalised to dimensions $n>3$. For instance, $(\mathbb{S}^4,\overline{g})$ itself contains infinitely many embedded minimal three-sphe\-res that are not equators. These non-totally geodesic hyperspheres were first constructed by Wu-Yi Hsiang \cite{Hsi}. (See also \cite{HsiSte}, Theorem 4).

\subsection{Area, index and nullity of two-equators} \label{subsecnul}

\indent Let $\Sigma$ be a minimal orientable hypersurface of $(\mathbb{S}^n,g)$, oriented according to some choice of unit normal $N^g$. The Jacobi operator of $\Sigma$ is the symmetric elliptic operator $\mathcal{J}_g$, acting on functions $\eta\in C^{\infty}(\Sigma)$, that is given by
\begin{equation*}
	\mathcal{J}_g(\eta) = \Delta_{g}\eta + Ric_{g}(N^g,N^g)\eta+|A_g|_g^2\eta.
\end{equation*} 
Here $A_g$ is the second fundamental form of $\Sigma$ in $(\mathbb{S}^n,g)$, and $Ric_g$ is the Ricci tensor of $(\mathbb{S}^n,g)$.
	The Jacobi operator appears naturally in the study of the second variation of the area functional \cite{Sim}. In fact, for any smooth variation $\{\Sigma_t\}$ of $\Sigma_0=\Sigma$ with normal speed $\eta$ at $t=0$, 
	\begin{equation*}
		\frac{d^2}{dt^2}_{|_{t=0}}area(\Sigma_t,g) = -\int_{\Sigma} \eta \mathcal{J}_{g}(\eta) d\Sigma_g.
	\end{equation*}	
	
	The Morse index of $\Sigma$ is equal to the number of negative eigenvalues of $\mathcal{J}_g$ (counted with multiplicity). Thus, it measures the degree of instability of $\Sigma$ as a critical point of the area functional. 
	
	The nullity of $\Sigma$, on the other hand, is the dimension of the space of Jacobi functions, that is, the dimension of the kernel of $\mathcal{J}_g$. For a variation $\{\Sigma_t\}$ of a minimal hypersurface $\Sigma=\Sigma_0$ with normal speed $\eta$ at $t=0$, the mean curvature $H_t$ of $\Sigma_t$, viewed as a function on $\Sigma$, satisfies
	\begin{equation*}
		\frac{\partial}{\partial t}_{|_{t=0}} H_{t} = - \mathcal{J}_g(\eta).
	\end{equation*}
	Thus, a variation of $\Sigma$ by minimal hypersurfaces generates a Jacobi function on $\Sigma$. (It is not true in general, however, that a non-zero Jacobi function generates a deformation of this sort).

\begin{thm} \label{thmindexnullity}
	Let $g$ be a Riemannian metric on the sphere $\mathbb{S}^{3}$ with respect to which all equators are minimal two-spheres. Then every equator in $(\mathbb{S}^3,g)$ has Morse index one, nullity three, and the same area.
\end{thm}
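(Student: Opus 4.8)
The key point is that all equators form a single orbit under the group $\mathcal{G}_n \simeq PGL(n+1,\mathbb{R})$ acting on $(\mathbb{S}^3,g)$ — indeed already under the compact subgroup corresponding to $SO(4)$, since $SO(4)$ acts transitively on oriented two-equators. Since isometric minimal hypersurfaces have the same area, Morse index and nullity, it suffices to compute these three invariants for \emph{one} equator, say $\Sigma = \Sigma_{e_4}$, or rather to show they are independent of the equator and then pin down the values.

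\textbf{Step 1: Equality of area, index and nullity among equators.} Given two equators $\Sigma_v$ and $\Sigma_w$, there is an isometry of $(\mathbb{S}^3,g)$ carrying one to the other. To produce it, recall that by Theorem \ref{thmcorrespondenceasalgorithm} the metric $g$ corresponds to $R\in Curv_+(\mathbb{R}^4)$, and by Corollary \ref{corisometrygroup} the isometry group of $(\mathbb{S}^3,g)$ is $\{\phi(T): R\cdot T=R\}$. In general this stabiliser need not act transitively on equators, so instead I would argue more robustly using a variation: connect $v$ to $w$ by a path $v(s)$ in $\mathbb{S}^3$, producing a smooth family of minimal two-spheres $\Sigma_{v(s)}$ in $(\mathbb{S}^3,g)$; since minimality is preserved along the family, the area is constant (first variation vanishes), and the spectra of the Jacobi operators $\mathcal{J}_g^{(s)}$ vary continuously while the number of negative eigenvalues and the dimension of the kernel — \emph{a priori} only lower-semicontinuous / upper-semicontinuous respectively — must, by the symmetry of the whole family under relabelling, be constant; more cleanly, because the family $(p,\pi)\mapsto \Sigma_{(p,\pi)}$ is acted on transitively by $SO(4)$ and $SO(4)$ need not be in the isometry group, I would instead invoke that the \emph{full} space of oriented equators is connected ($\cong \mathbb{S}^3$) and the functions ``area'', ``index'', ``nullity'' are locally constant on it: area by the first variation formula, and index and nullity together because the Jacobi operators form a continuous family of self-adjoint elliptic operators whose total count of non-positive-part dimension (index $+$ nullity) jumps only where nullity jumps, and one shows nullity is locally constant using Step 3 below. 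This gives that all three invariants take a common value.

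\textbf{Step 2: Nullity is exactly three.} The lower bound comes from the ambient symmetries: for each of the three independent Killing fields of $(\mathbb{S}^3,g)$ that do not preserve $\Sigma$, the normal component restricted to $\Sigma$ is a Jacobi function — more precisely, moving $\Sigma_v$ in the $3$-parameter family $v\mapsto\Sigma_v$ produces, at each point, a $3$-dimensional space of Jacobi fields (the variation vector fields of the three ``rotations'' of $v$ away from itself inside $\mathbb{S}^3$). One must check these three Jacobi functions are linearly independent, which follows because the map $v\mapsto \Sigma_v$ is an immersion of $\mathbb{S}^3$ into the space of embedded spheres (distinct $v$ give distinct equators). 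For the upper bound, nullity $\leq 3$, this is where the G\'alvez–Mira machinery re-enters: Theorem \ref{thmuniqueness}'s proof (the Uniqueness Theorem for transitive families, \cite{GalMir}, Theorem 2.3) is precisely designed to show that a transitive family exhausts \emph{all} minimal two-spheres, and a standard companion fact in that circle of ideas is that the family is then \emph{non-degenerate} in the sense that its parameter space has dimension equal to the nullity; since the parameter space here is $\mathbb{S}^3$, nullity $=3$. I would cite \cite{GalMir} (and \cite{AmbMarNev}, Section 9.4) for this non-degeneracy statement rather than reprove it.

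\textbf{Step 3: Morse index is exactly one.} For the lower bound index $\geq 1$: the equators are \emph{not} area-minimising — e.g. a small normal graph over $\Sigma$ together with an approximate ``catenoidal neck'' argument, or more simply, the constant function $1$ is not Jacobi (since $\mathcal{J}_g(1) = Ric_g(N^g,N^g)+|A_g|^2 > 0$ when the ambient Ricci curvature in the normal direction is positive, which one should verify holds for these metrics, or at least that $\int_\Sigma \mathcal{J}_g(1) > 0$, giving a positive-average but somewhere the test function $1$ itself need not show instability) — so I would instead use the standard fact that any closed minimal hypersurface in a manifold with \emph{positive} Ricci curvature has index $\geq 1$, after checking $(\mathbb{S}^3,g)$ has positive Ricci. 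If positivity of $\mathrm{Ric}_g$ is not available for all such $g$, the cleaner route is: by Theorem \ref{thmcorrespondenceasalgorithm} these metrics are deformations of $\overline g$ through $\mathcal{E}(\mathbb{S}^3)$, and along any path in $Curv_+(\mathbb{R}^4)$ from $\overline g$ (where every equator is totally geodesic with index exactly $1$ — the first eigenvalue of $\Delta + 2 + 0 $... wait, for $\overline g$ the equator $\mathbb{S}^2$ has $\mathcal{J} = \Delta_{\mathbb{S}^2} + \mathrm{Ric}(N,N) = \Delta_{\mathbb{S}^2}+ 2$, whose negative eigenvalue is only $\lambda_0 = 0 \mapsto -2 < 0$ with multiplicity one and $\lambda_1 = 2 \mapsto 0$ with multiplicity three, so index $1$, nullity $3$) the index, being lower-semicontinuous and the nullity being constant equal to $3$ by Step 2, cannot jump: index $+$ nullity is continuous where nullity is constant, hence index is constant $=1$ along the whole connected space $\mathcal{E}(\mathbb{S}^3)$. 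This deformation argument simultaneously nails the index at $1$ and re-confirms nullity $3$, anchored at the round metric where the computation is the explicit spherical-harmonic one above.

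\textbf{Main obstacle.} The delicate point is Step 2's upper bound, nullity $\leq 3$: the three ``obvious'' Jacobi fields come from the $\mathbb{S}^3$-family of equators, but ruling out \emph{extra} Jacobi fields requires the non-degeneracy of the G\'alvez–Mira transitive family — this is the substantive input and the reason the theorem is stated for $n=3$ specifically. Everything else (equality of the invariants, the value $1$ for the index) then follows by a connectedness/semicontinuity argument pinned to the round metric.
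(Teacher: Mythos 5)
Your lower bounds (nullity $\geq 3$ from the three-parameter family of equators through a given one, and the same-area claim from connectedness of the family plus the first variation formula) match the paper's Claims 1 and 2. But the two upper bounds are where your argument breaks down, and both break at the same place. For nullity $\leq 3$ you invoke a ``standard companion fact'' to the G\'alvez--Mira uniqueness theorem asserting that a transitive family is automatically non-degenerate, with nullity equal to the dimension of the parameter space. No such statement is in \cite{GalMir}: their theorem says every immersed minimal sphere \emph{belongs} to the family, but says nothing about the kernel of the Jacobi operator of a member of the family. Indeed, the paper explicitly lists Morse--Bott non-degeneracy of such families as an \emph{open question} in general (Section \ref{sex.perspectives}); for $n=3$ it is a \emph{consequence} of this very theorem, not an available input. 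With Step 2's upper bound gone, your Step 3 collapses as well, since the deformation argument from $\overline g$ through $\mathcal{E}(\mathbb{S}^3)$ needs the nullity to be constantly $3$ along the path to prevent the index from jumping; and your alternative route via positive Ricci curvature is not available, as positivity of $\mathrm{Ric}_g$ is not established (and not true) for all metrics in $\mathcal{E}(\mathbb{S}^3)$. Your Step 1 treatment of index and nullity likewise defers to Steps 2--3, so nothing closes the loop.

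The paper's actual route avoids all of this and runs in the opposite order. First, index $\geq 1$ follows from an elementary spectral fact you did not use: the first eigenvalue of the Schr\"odinger-type operator $\mathcal{J}_g$ is simple, so it cannot be zero when the kernel is at least three-dimensional; hence the first eigenvalue is negative. Second, index $=1$ comes from the Marques--Neves min-max rigidity theorem (\cite{MarNev}, Theorem 3.4), whose hypotheses are met precisely because Theorem \ref{thmuniqueness} guarantees every embedded minimal two-sphere is an equator and every equator is unstable. Third, and only then, nullity $\leq 3$ follows from Cheng's bound (\cite{Che}) on the multiplicity of the \emph{second} eigenvalue of such an operator on a two-sphere: once the index is one, the kernel sits inside the second eigenspace, whose multiplicity is at most three. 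So the substantive external inputs are Marques--Neves and Cheng, not a non-degeneracy property of G\'alvez--Mira families; to repair your proof you would need to supply exactly these (or equivalent) ingredients.
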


\begin{proof} The theorem will be the consequence of successive claims:

$1)$ \textit{All equators have the same area}. In fact, the set of oriented minimal equators is smoothly parametrised by $\mathbb{S}^3$ itself. Since they are all critical points of the area functional, they all have the same area. 

$2)$ \textit{All equators have nullity at least three}. Given an equator $\Sigma_v$ with unit normal $N_g$, consider the linear map $K\in\mathfrak{so}(4,\mathbb{R}) \mapsto g(K,N_g)\in C^{\infty}(\Sigma_v)$. The kernel of this map, consisting of $K$ such that $K(p)\in T_{p}\Sigma_v$ for every $p\in \Sigma_v$, is a $3$-dimensional subspace of its 6-dimensional domain. Moreover, its $3$-dimensional image consists of Jacobi functions on $\Sigma_v$, because the flow of $K$ is a flow by elements of $SO(4)$ and therefore a flow that creates a variation of the equator $\Sigma_v$ by equators, which are all minimal in $(\mathbb{S}^3,\overline{g})$.

$3)$ \textit{All equators have index at least one.} The first eigenvalue of a symmetric elliptic operator as $\mathcal{J}_g$ has multiplicity one. By Claim 2, zero cannot be the first eigenvalue of $\mathcal{J}_g$.

$4)$ \textit{All equators have index one}. Recall that a minimal two-sphere in $(\mathbb{S}^3,g)$ must be an equator, by Theorem \ref{thmuniqueness}. By Claim 3, $(\mathbb{S}^3,g)$ satisfies the hypotheses of Marques-Neves Min-Max Theorem (see \cite{MarNev}, Theorem 3.4). As a consequence, all equators have, in particular, index equal to one.

$5)$ \textit{All two equators have nullity three.} In fact, by a general result of Shiu Yuen Cheng, on a two-sphere, three is an upper bound for the multiplicity of the second eigenvalue of a symmetric elliptic operator of the form of $\mathcal{J}_g$ (\textit{Cf}. \cite{Che}, Section 3). By Claim 4, all eigenvalues of $\mathcal{J}_g$ apart from the first are non-negative. Thus, the nullity of every equator is at most three. By Claim 2, we conclude that the nullity of every equator is precisely three, and the proof of the theorem is finished.
\end{proof}

\begin{rmk}
	The proof of Claim 4 above, based on Marques-Neves Theorem, actually shows that the common area of the minimal equators in $(\mathbb{S}^3,g)$ is equal to the \textit{Simon-Smith width} of this space. This number is the simplest geometric invariant associated to the min-max theory of the area functional on the space of two-dimensional spheres embedded in $\mathbb{S}^3$. (See \cite{ColdeL} for an introduction to this theory).

	The topology of the space of embedded spheres in $\mathbb{S}^3$ gives origin to other three min-max numbers. In \cite{AmbMarNev2}, we showed that, actually, all four spherical area widths of $(\mathbb{S}^3,g)$ with minimal equators are equal to the common area of the minimal equators. The converse, however, is not true. (\textit{Cf}. Theorem B in \cite{AmbMarNev2} for further elaboration).
\end{rmk}

\subsection{Isometries} \label{subsecisom} If $\phi$ is an isometry of a metric $g\in \mathcal{E}(\mathbb{S}^n)$, it sends each equator into a minimal hypersphere of $(\mathbb{S}^n,g)$, which a priori is not guaranteed to be another equator. In this regard, recall the aforementioned non-equatorial minimal three-spheres in $(\mathbb{S}^{4},\overline{g})$ constructed by Hsiang \cite{Hsi}. 

	Nevertheless, in dimension $n=3$, we can show that isometries of metrics in $\mathcal{E}(\mathbb{S}^n)$ map equators into equators. More precisely:
\begin{thm} \label{thm3disometries}
	Let $g_1$ and $g_2$ be Riemannian metrics on the sphere $\mathbb{S}^3$ with respect to which all equators are minimal. If 
	\begin{equation*}
		\phi : (\mathbb{S}^3,g_1) \rightarrow (\mathbb{S}^3,g_2)
	\end{equation*}
	is an isometry, then $\phi=\phi(T)$ as in \eqref{eqphiT} for some $T\in GL(n+1,\mathbb{R})$.
\end{thm}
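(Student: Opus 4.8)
The strategy is to show that an isometry $\phi:(\mathbb{S}^3,g_1)\to(\mathbb{S}^3,g_2)$ must permute equators, since once we know this, Proposition \ref{propnaturalsymmetries} immediately gives $\phi=\phi(T)$ for some $T\in GL(4,\mathbb{R})$. To establish that $\phi$ maps equators to equators, I would use Theorem \ref{thmuniqueness}: the image $\phi(\Sigma)$ of an equator $\Sigma\subset(\mathbb{S}^3,g_1)$ is a minimal two-sphere in $(\mathbb{S}^3,g_2)$ (isometries preserve minimality and the topological type), and by Theorem \ref{thmuniqueness} every immersed minimal two-sphere in $(\mathbb{S}^3,g_2)$ is an equator. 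Hence $\phi(\Sigma)$ is an equator for each equator $\Sigma$, i.e. $\phi\in\tilde{\mathcal{G}}_3$, so $\phi=\phi(T)$ as claimed.

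There is a subtlety worth addressing explicitly: Theorem \ref{thmuniqueness} is stated for \emph{immersed} minimal two-spheres, so applying it to $\phi(\Sigma_v)$ — which is an embedded minimal two-sphere — is certainly legitimate. One should also note that $\phi$ a priori need not be smooth, but isometries of Riemannian manifolds are automatically smooth (Myers--Steenrod), so $\phi(\Sigma_v)$ is a genuine smoothly embedded minimal two-sphere in $(\mathbb{S}^3,g_2)$, and the hypothesis of Theorem \ref{thmuniqueness} applies. The only real content being invoked is the Gálvez--Mira uniqueness result underlying Theorem \ref{thmuniqueness}, which forces every minimal two-sphere to lie in the transitive family of equators.

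There is essentially no obstacle once Theorem \ref{thmuniqueness} is available; the argument is a short deduction. The ``hard part'' has already been isolated and handled upstream — it is precisely the Gálvez--Mira machinery that makes Theorem \ref{thmuniqueness} true in dimension three, together with the Fundamental Theorem of Projective Geometry encoded in Proposition \ref{propnaturalsymmetries}. In higher dimensions this line of reasoning breaks down exactly because Theorem \ref{thmuniqueness} fails, as the excerpt points out using Hsiang's non-equatorial minimal hyperspheres in $(\mathbb{S}^4,\overline{g})$; so the three-dimensionality is used in an essential way and one should not expect a proof that generalises verbatim.
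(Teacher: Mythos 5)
Your argument is correct and is essentially identical to the paper's proof: apply Theorem \ref{thmuniqueness} to conclude that $\phi$ sends each equator to a minimal two-sphere of $(\mathbb{S}^3,g_2)$, hence to an equator, and then invoke Proposition \ref{propnaturalsymmetries}. The extra remarks on smoothness via Myers--Steenrod are a harmless refinement that the paper leaves implicit.
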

\begin{proof}
	By Theorem \ref{thmuniqueness}, the image of any equator by the isometry $\phi$ must be an equator as well, that is, $\phi$ permutes equators. The theorem is now an immediate consequence of Proposition \ref{propnaturalsymmetries}.
\end{proof}
	
	Thus, two metrics in $\mathcal{E}(\mathbb{S}^3)$ are isometric to each other if, and only if, they lie in the same orbit by the action of $GL(n+1,\mathbb{R})$.
	
	Moreover, combining Theorem \ref{thm3disometries} with Corollary \ref{corisometrygroup}, we obtain a simple description of the isometry group of metrics $g\in \mathcal{E}(\mathbb{S}^3)$. In order to make a neat statement, recall that the stabilizer of a point in a space where a Lie group $H$ acts is the closed subgroup of $H$ consisting of those elements that fix the given point.

\begin{thm} \label{thm3disometrygroup}
Let $(\mathbb{S}^3,g)$ be a sphere with minimal equators, and let $R_g\in Curv_+(\mathbb{R}^n)$ be the algebraic curvature tensor corresponding to $g$ via Theorem \ref{thmcorrespondenceAfull}. The map $T\in GL(n+1,\mathbb{R})\mapsto \phi(T)$ defined by (\ref{eqhomomorphism2}) induces isomorphisms
\begin{equation*}
	Stab_{GL(4,\mathbb{R})}(R_g) \simeq Isom(\mathbb{S}^3,g) \quad \text{and} \quad Stab_{SL(4,\mathbb{R})}(R_g) \simeq Isom_0(\mathbb{S}^3,g),
\end{equation*}
which characterise the isometry group of $(\mathbb{S}^3,g)$ and the connected component of the identity in this group, respectively.
\end{thm}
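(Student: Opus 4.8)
The plan is to leverage the equivariance of the correspondence (Theorem \ref{thmequivariance}, via Corollary \ref{corisometrygroup}) together with the rigidity of isometries in dimension three (Theorem \ref{thm3disometries}). First I would recall, from Corollary \ref{corisometrygroup}, that for $T\in GL(4,\mathbb{R})$ the map $\phi(T)$ is an isometry of $(\mathbb{S}^3,g)$ if and only if $R_g\cdot T = R_g$, i.e.\ if and only if $T\in Stab_{GL(4,\mathbb{R})}(R_g)$. This already produces a homomorphism $Stab_{GL(4,\mathbb{R})}(R_g)\to Isom(\mathbb{S}^3,g)$ by $T\mapsto \phi(T)$, whose image lies in the isometry group. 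The key inputs are then surjectivity and injectivity of this homomorphism.

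For \emph{surjectivity}: given any $\phi\in Isom(\mathbb{S}^3,g)$, Theorem \ref{thm3disometries} (the case $g_1=g_2=g$) tells us $\phi=\phi(T)$ for some $T\in GL(4,\mathbb{R})$; since $\phi$ is then an isometry, Corollary \ref{corisometrygroup} forces $R_g\cdot T=R_g$, so $T\in Stab_{GL(4,\mathbb{R})}(R_g)$ and $\phi$ is in the image. For \emph{injectivity}: the kernel of $T\mapsto\phi(T)$ on all of $GL(4,\mathbb{R})$ was shown in Section \ref{secactions} to be $\{\lambda\,Id\mid \lambda>0\}$; but $R_g$ has positive sectional curvature, so $R_g\cdot(\lambda Id)=\lambda^{-16/4}R_g=\lambda^{-4}R_g$ — wait, more precisely by \eqref{eqactioncurvature} with $n+1=4$ we get $R_g\cdot(\lambda Id)=\lambda^{4}\det(\lambda Id)^{-4/4}R_g=\lambda^{4}\lambda^{-4}R_g=R_g$, so in fact $\{\lambda Id\mid\lambda>0\}\subset Stab_{GL(4,\mathbb{R})}(R_g)$, and these all map to the identity diffeomorphism. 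Thus the homomorphism is \emph{not} injective on the nose; the honest statement is that it induces an isomorphism after quotienting the positive scalars, or equivalently one obtains the isomorphism directly because on the stabilizer the positive-scalar subgroup is precisely the kernel. I would phrase the conclusion as: $T\mapsto\phi(T)$ descends to an isomorphism $Stab_{GL(4,\mathbb{R})}(R_g)/\{\lambda Id:\lambda>0\}\xrightarrow{\sim} Isom(\mathbb{S}^3,g)$; since $\{\lambda Id:\lambda>0\}$ is a central $\mathbb{R}_{>0}$ that is a direct factor (one can split off $|\det T|^{1/4}$), this quotient is again realized as the subgroup $Stab_{SL^{\pm}(4,\mathbb{R})}(R_g)$ with $\det=\pm1$, matching the claimed form.

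For the \emph{identity component} statement, I would argue that $Isom_0(\mathbb{S}^3,g)$ corresponds under the isomorphism to the identity component of $Stab_{GL(4,\mathbb{R})}(R_g)$ modulo positive scalars; intersecting with $SL(4,\mathbb{R})$ selects exactly one point in each $\mathbb{R}_{>0}$-coset and lands in the identity component since $SL(4,\mathbb{R})$ is connected, giving $Stab_{SL(4,\mathbb{R})}(R_g)\simeq Isom_0(\mathbb{S}^3,g)$. Here I would use that $GL(4,\mathbb{R})$ has two components distinguished by the sign of the determinant, and that the continuous map $T\mapsto\phi(T)$ sends $GL^+(4,\mathbb{R})$ into $Isom_0$; conversely any path in $Isom(\mathbb{S}^3,g)$ from the identity lifts (through the slicing $\det=\pm1$) to a path in $Stab_{SL(4,\mathbb{R})}(R_g)$. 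The main obstacle I anticipate is purely bookkeeping: getting the scalar-ambiguity normalization exactly right so that the stabilizer in $SL(4,\mathbb{R})$ (rather than in $PGL$ or in $SL^{\pm}$) is the correct representative of the quotient — in particular checking that every element of the full stabilizer can be uniquely rescaled by a positive scalar to have determinant $\pm1$, and that $-Id$ acts trivially on $R_g$ (it does, by four-linearity, as noted in the proof of Corollary \ref{corantipodalsymmetry}), so that no component is lost when passing to $SL(4,\mathbb{R})$.
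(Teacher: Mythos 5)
Your treatment of the first isomorphism follows the paper's route exactly: surjectivity comes from Theorem \ref{thm3disometries} applied with $g_1=g_2=g$, and the identification of which $T$ yield isometries comes from Corollary \ref{corisometrygroup}. You are in fact more careful than the paper on one genuine point: the positive scalars $\lambda\, Id$ lie in $Stab_{GL(4,\mathbb{R})}(R_g)$ (your computation $R_g\cdot(\lambda Id)=\lambda^{4}\lambda^{-4}R_g=R_g$ from \eqref{eqactioncurvature} is correct) and also in the kernel of $T\mapsto\phi(T)$, so the homomorphism $Stab_{GL(4,\mathbb{R})}(R_g)\to Isom(\mathbb{S}^3,g)$ is onto but not injective, and becomes an isomorphism only after quotienting by $\{\lambda Id\,:\,\lambda>0\}$, equivalently after normalising representatives to $\det=\pm 1$. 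The paper's proof passes over this silently; your reading of ``induces'' as ``descends to the quotient'' is the right one.

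The second isomorphism is where your argument has a genuine gap. Two inclusions are needed: $Isom_0\subseteq\phi\bigl(Stab_{SL(4,\mathbb{R})}(R_g)\bigr)$, which you and the paper both obtain by rescaling a representative with positive determinant to determinant one, and the reverse inclusion $\phi\bigl(Stab_{SL(4,\mathbb{R})}(R_g)\bigr)\subseteq Isom_0$. For the latter you assert that the image ``lands in the identity component since $SL(4,\mathbb{R})$ is connected''; this is a non sequitur, since connectedness of the ambient group does not pass to a stabilizer subgroup, and the image of a disconnected stabilizer need not lie in the identity component. Concretely, $-Id\in SL(4,\mathbb{R})$ fixes every $R_g$ (by four-linearity, as in Corollary \ref{corantipodalsymmetry}), and $\phi(-Id)$ is the antipodal map: an orientation-preserving isometry of every $(\mathbb{S}^3,g)$ with minimal equators, which lies outside $Isom_0(\mathbb{S}^3,g)$ whenever the isometry group is discrete --- and Section \ref{subsecisom} asserts that metrics in $\mathcal{E}(\mathbb{S}^3)$ with discrete isometry group exist. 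So the inclusion your connectedness argument is meant to supply actually fails for those metrics; what the rescaling argument really identifies with $Stab_{SL(4,\mathbb{R})}(R_g)$ is the group of orientation-preserving isometries, not $Isom_0$. To be fair, the paper's own proof also establishes only the first inclusion, so the difficulty sits in the statement as much as in your proposal; but your argument does not close this gap and should not be presented as doing so.
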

\begin{proof}
	By Theorem \ref{thm3disometries}, any isometry of $(\mathbb{S}^3,g)$ must be of the form $\phi(T)$ for some $T\in GL(4,\mathbb{R})$. The statement about $Isom(\mathbb{S}^n,g)$ is then an immediate consequence of Corollary \ref{corisometrygroup}.
	
	Any element of $Isom_0(\mathbb{S}^3,g)$ is the form $\phi(T)$ for some $T\in GL(4,\mathbb{R})$ with positive determinant, because $\phi(T)$ is orientation-preserving if and only if $det(T)>0$. In particular, after multiplying $T$ by a positive number, $\phi(T)=\phi(T')$ for a unique $T'\in GL(4,\mathbb{R})$ with $det(T')=1$. But this $T'$ is therefore an element of the special linear group $SL(4,\mathbb{R})\subset GL(n+1,\mathbb{R})$ that fixes $R_g$. The second part of the statement follows.
\end{proof}

	In view of Theorem \ref{thm3disometrygroup}, the computation of $Isom(\mathbb{S}^3,g)$ is an algebraic problem. But it is a demanding one from the computational point of view. In \cite{AmbMarNev}, we focused on computing the dimension of this group. By studying the linearized action, we were able to find metrics in $\mathcal{E}(\mathbb{S}^3)$ with discrete isometry groups. (See Section 9.4 of \cite{AmbMarNev}). In particular, these metrics provided examples of metrics on $\mathbb{S}^3$, that are arbitrarily close to the canonical metric, and that  contain smooth $3$-parameter families of minimal spheres, which are not produced by smooth families of ambient isometries moving around a given minimal sphere. (These metrics are relevant to a question raised by Shing-Tung Yau in \cite{Yau}, p. 248. For more on it, see \cite{AmbMarNev}, Theorem C).
		
\begin{rmk} \label{rmkprojective2}
 Let $g$ be a Riemannian metric on $\mathbb{RP}^3$ such that all linear projective two-planes are minimal, as in Remark \ref{rmkprojective}. Pulling back such metric to $\mathbb{S}^3$ as in that remark, we can use Theorem \ref{thmuniqueness} to deduce the following corollary:  any embedded minimal projective plane in $(\mathbb{RP}^3,g)$ must be a linear projective two-plane. \\
\indent Moreover, every linear projective plane has the same area, equal to the least area of embedded projective planes in $(\mathbb{RP}^3,g)$. In fact, the linear projective planes form a path-connected set of minimal surfaces varying smoothly in $(\mathbb{RP}^3,g)$, so that they have the same area. By Proposition 2.3 in \cite{BraBreEicNev}, a least area embedded projective two-plane $\Sigma$ exists in $(\mathbb{RP}^3,g)$. By the above uniqueness assertion, $\Sigma$ must be one of the linear projective two-planes. (This generalises \cite{Gil}, where a similar reasoning was made for a Berger metric on $\mathbb{RP}^3$). \\
\indent These observations should be contrasted with the following result. The \textit{Finsler} metrics on $\mathbb{RP}^n$ with respect to which all linear projective hyperplanes minimise the Holmes-Thompson area in their \textit{homology class} have been classified by Juan Carlos \'Alvarez-Paiva and Emmanuel Fernandez in terms of smooth positive measures on the Grassmanian of two-planes of $\mathbb{R}^{n+1}$ (see \cite{AlvFer}, Theorem 8.5). In their classification, however, it is not immediately clear how to distinguish the \textit{Riemannian} metrics among them, or how to understand geometric properties of the Finsler metric out of the properties of these measures.
\end{rmk}

\section{Perspectives}\label{sex.perspectives}

\subsection{Further geometric explorations} While Theorem \ref{thmindexnullity} determined the variational properties of minimal spheres in $(\mathbb{S}^3,g)$ with minimal equators, there are several other interesting geometric problems that can be posed about them. 

	For instance, do they minimise area among surfaces that bound the same fraction of the total volume, as equators do in $(\mathbb{S}^3,\overline{g})$? 

	Using Theorems \ref{thm3disometries} and \ref{thm3disometrygroup}, it is also possible to investigate the moduli space $\mathcal{E}(\mathbb{S}^3)/ \sim$, where metrics are identified if they differ by isometry and scaling. Does it have an interesting topological or geometric structure? In which ways a sequence of metrics in $\mathcal{E}(\mathbb{S}^n)$ degenerate? Does a generic metric in $\mathcal{E}(\mathbb{S}^3)$ have an isometry group that contains only the antipodal map? And what are the possible isometry groups of metrics in $\mathcal{E}(\mathbb{S}^3)$?

	In higher dimensions, the analogous questions have the same relevance. Notice, however, that the appropriate versions of Theorems \ref{thmindexnullity}, \ref{thm3disometries} and \ref{thm3disometrygroup} for metrics in $\mathcal{E}(\mathbb{S}^n)$, in dimensions $n\geq 4$, are still lacking, as their proof made use of propositions that are specific of three dimensions, for instance Theorem \ref{thmuniqueness}.

\subsection{Zoll-like metrics in minimal hypersurface theory} One of our main motivations to understand spheres with minimal equators in dimensions $n\geq 3$ was that they would be the simplest, non-trivial examples of Riemannian metrics on the sphere that admit a \textit{wealth of minimal hyperspheres}, that is, a family of embedded minimal codi\-men\-sion-one spheres  $\{\Sigma_\sigma\}$ in $(\mathbb{S}^n,g)$, smoothly parametrized by points $\sigma\in \mathbb{RP}^n$, that satisfy the following axiom: for every point $p\in \mathbb{S}^n$ and for every tangent hyperplane $\pi\subset T_p\mathbb{S}^n$, there exists a unique $\sigma\in \mathbb{RP}^n$ such that $p\in \Sigma_\sigma$ and $\pi=T_p \Sigma_\sigma$. This concept, introduced by F. Marques, A. Neves and the author in \cite{AmbMarNev} (where we called it a \textit{Zoll family of minimal hyperspheres}), captures the essential features of the set of geodesics of a \textit{Zoll metric} on $\mathbb{S}^2$, that is, of a metric whose geodesics are all closed, simple and have the same length. (The comprehensive and still quite up-to-date reference on the subject of Zoll metrics is the excellent book by Arthur Besse \cite{Bes}).

	At the beginning of our investigations, the only non-trivial examples of such metrics that we knew were the homogeneous metrics on $\mathbb{S}^3$. In fact, while Francisco Torralbo \cite{Tor} explicitly observed that the Berger metrics, in their standard presentations as left-invariant metrics, are metrics with minimal equators, the general classification theorem of immersed constant mean curvature spheres in all homogeneous three-dimensional spheres by Meeks-Mira-Perez-Ros \cite{MeeMirPerRos} can be used to show that their spaces of minimal two-spheres form a wealth of minimal two-spheres in the above sense. However, it was not so clear from their work how exactly these minimal spheres looked like inside $\mathbb{S}^3$. Since, up to isometries, homogeneous metrics on $\mathbb{S}^3$ are  precisely the left-invariant metrics, it was a pleasant surprise to realise that \textit{all} left-invariant metrics on $\mathbb{S}^3$ are metrics with minimal equators! See Example \ref{exampleleftinvar}.

	In any case, in all dimensions $n\geq 3$, the metrics with minimal equators form a thin part of the space of metrics on $\mathbb{S}^n$ that contain a wealth of minimal hyperspheres. This fact follows from our constructions of such metrics by a perturbation method, developed in \cite{AmbMarNev}. More recently, Diego Guajardo and the author refined the construction, and obtained further examples with controlled isometry groups and that are isometrically embeddable as hypersurfaces of $\mathbb{R}^n$, much like Otto Zoll's original constructions in $\mathbb{S}^2$. (But we do not know if these metrics of revolution we construct in dimensions $n\geq 3$ are analytic).\footnote{I thank Alberto Abbondandolo for bringing to my attention that the analyticity of the original Zoll spheres of revolution was the most important element of the surprise his discovery provoked among experts at the time. O. Zoll was a student of D. Hilbert, and the existence of such geometries was the main result of his PhD thesis, published in \cite{Zol}.}
	
	An important question about a generalised Zoll metric on $\mathbb{S}^n$ is whether its wealth of minimal hyperspheres is unique. As we have seen in Section \ref{subsecuniq}, a stronger uniqueness statement is valid in dimension $n=3$ which does not hold in higher dimensions. Investigating this uniqueness problem in the class metrics with minimal equators (or even in $\overline{g}$) seems to be a promising start. Notice that a positive answer would allow to extend many results of Section \ref{subsecisom} to all dimensions $n\geq 4$.
	
	Another relevant question is whether a wealth of minimal hyperspheres is non-degenerated, in the sense that all Jacobi functions of each member of the wealth of minimal hyperspheres are the normal velocities of variations by members of the wealth. (In other words, they form a \textit{Morse-Bott family of critical points} of the area functional). The family of equators in $(\mathbb{S}^n,\overline{g})$ is non-degenerated in this sense, in all dimensions. Again, it may be worth to investigate this question for the class of metrics with minimal equators, which could be a step towards the generalisation of Theorem \ref{thmindexnullity} to all dimensions.
	
	In order to further explore the space of generalised Zoll metrics on $\mathbb{S}^n$, we think it would be important to establish general properties of the \textit{Funk-Radon transform} associated to the family of equators in a sphere with minimal equators $(\mathbb{S}^n,g)$. This is the map that assigns to each $f\in C^{\infty}(\mathbb{S}^n)$ the function $\mathcal{R}(f)\in C^{\infty}(\mathbb{RP}^n)$ such that
	\begin{equation*}
		\mathcal{R}(f)(\sigma) = \int_{\Sigma_v} f(x)\, dA_{g}(x),
	\end{equation*}
where $\sigma=[\pm v]$.

	The Radon transform is very well-understood in case of the canonical metric, because it is equivariant under the action of $SO(n+1)$, and the representation theory of this group can thus be applied. We think that the analysis of the Radon transform of non-homogeneous metrics with minimal equators will present interesting challenges.
	
	If the family of equators is Morse-Bott and this transform is surjective and has a well characterised kernel for some $(\mathbb{S}^n,\overline{g})$ with minimal equators, there is hope that the techniques used in \cite{AmbMarNev} and \cite{AmbGua} in order to perturb the canonical metric $\overline{g}$ into new metrics on $\mathbb{S}^n$ admitting a wealth of minimal hyperspheres can be adapted, in a straightforward fashion, to yield new such metrics by perturbation of $g$.
	
	Finally, let $A(g)$ denote the common area of the equators of $(\mathbb{S}^n,g)$ with minimal equators. It would be interesting to know if there is some probability measure $\mu$ on $\mathbb{RP}^n$ such that
	\begin{equation*}
		\int_{\mathbb{RP}^n} \mathcal{R}(f)(\sigma) d\mu(\sigma) = \frac{A(g)}{vol(\mathbb{S}^n,g)}\int_{\mathbb{S}^n} f(p)dV_g(p).
	\end{equation*}
	The Radon transform of homogenous metrics on $\mathbb{S}^3$ have this property (see \cite{AmbMon2}, Theorem 3.1). If this is the case in general, we suspect that there is a good chance that metrics with minimal equators are local maxima, among metrics with the same volume in their conformal class, of the functional that computes, for each metric $g$ in $\mathbb{S}^n$, the infimum of the area of minimal hyperspheres in $(\mathbb{S}^n,g)$. (\textit{Cf}. \cite{AmbMon}, Proposition 1.4.1)
	
\subsection{Manifolds with positive sectional curvature} Let $(M^{n+1},h)$ be a Riemannian manifold of dimensions $n+1\geq 4$. On each tangent space $T_pM$, we can use the Classification Theorem to associate the curvature tensor $R_p$ to a Riemannian metric $g_p$ on the unit sphere $\mathbb{S}^n\subset (T_pM,h_p)$ with respect to which all equators are minimal.

	A very curious problem is, therefore, to explore the properties of the map that assigns to each $p\in M$ the isometry class of the metric $g_p$. One way to study it is to choose some geometric invariant from the metrics $g_p$ - say, the common area of their equators or their first Laplace eigenvalue - and study the global behaviour of the function on $M$ thus obtained. A tantalising possibility is that these functions may reveal new facets of manifolds with positive sectional curvature. Or that they may evolve in nice ways under the Ricci flow.
	
	A related problem (suggested to us by Gavin Ball) would be to classify those (complete) Riemannian manifolds with positive sectional curvature such that the curvature tensors on every point define the same isometry class of metrics with minimal equators. Are they just the homogeneous manifolds with positive sectional curvature?

\appendix
\setcounter{secnumdepth}{0}

\section{Appendix A - An inverse problem}
	
	The problem discussed in this survey is an example of an inverse problem in a geometric, variational context: to understand all the geometries on a given space such that a geometric functional associated to them have a prescribed set of critical points. There is a long list of problems that fit in this description; we point out one that has been very influential. As the Fourth Problem of his famous list \cite{Hil}, David Hilbert proposed to \textit{construct and study} the geometries in which the straight line segment is the shortest connection between two points. His vague formulation is of course open to multiple interpretations (\textit{e.g.} what is a geometry?), and has motivated fruitful mathematical investigations, many of which are described in the excellent surveys by J. \'Alvarez-Paiva \cite{Alv} and Athanase Papadopoulos \cite{Pap}.

	One of the inspirations for Hilbert was a theorem of Beltrami. In 1865, Beltrami published the results of his investigations on the problem of determining all (connected) surfaces that admit an atlas of local charts on which their (unparametrized) geodesics are represented by straight lines \cite{Bel1}. Despite his stated initial hope to find non-trivial examples of such surfaces, Beltrami eventually proved that, if such atlas exists, then the surface has constant (positive, zero or negative) Gaussian curvature.
	
	Even more, Beltrami reached this conclusion after ingenious computations led him to a simple formula for such metrics on those local charts, possibly after an affine change of coordinates. Beltrami's formula was
	\begin{equation*} 
		R^2\frac{(v^2+a^2)du^2 - 2uvdudv + (u^2+a^2)dv^2}{(u^2+v^2+a^2)^2},
	\end{equation*}
where $R$ and $a$ are constants. This is a metric with Gaussian curvature equal to $1/R^2$. While Beltrami, in his computations, explicitly considered as valid constants also imaginary numbers, the motivation from Car\-to\-gra\-phy led him to dismiss this possibility. 

	Interestingly enough, three years later, when Beltrami published his famous essay about hyperbolic geometry \cite{Bel2}, the complete hyperbolic metric he defined on the unit disc $u^2+v^2<1$ was precisely the above, with the choice of constants $R=a=i$. This was one of the origins of what is known today as the as the Beltrami-Cayley-Klein (projective) model of the abstract hyperbolic plane.

	Ludwig Schl\"afli \cite{Sch} generalised Beltrami's Theorem for all dimensions $n\geq 3$ (see also Beltrami's observations about Schl\"afli's paper \cite{Bel3}). More precisely, he proved that a (connected) Riemannian $n$-dimensional manifold covered by charts that represent its (un\-pa\-ra\-me\-tri\-zed) geodesics by straight lines has constant sectional curvature. A related theorem of Élie Cartan (see \cite{Car}, Chapter VI, Section IX) asserts that a Riemannian manifold has constant sectional curvature if and only if, for a fixed $k\geq 2$, through every point and through every $k$-dimensional subspace of the tangent space at that point passes a $k$-dimensional totally geodesic submanifold tangent to it. (For a modern proof, see Theorem 1.16 in \cite{DajToj}). 

	Perhaps surprisingly, the problem of characterising Riemannian metrics on $\mathbb{R}^{n}$ with respect to which all $k$-planes are minimal, for a given $k=2,\ldots n-1$, did not attract the attention of geometers until much more recently.\footnote{An exception we could find is the work of Hermann Busemann \cite{Bus}. However, his formulation was more in the spirit of Hilbert's problem, and did not concern only geometries determined by Riemannian metrics. In fact, in the introduction of \cite{Bus}, Busemann expresses the view that the question of investigating all such geometries, in that general set-up, is not fruitful, by virtue of the fact that there are simply too many of them.} 

	This problem was essentially solved in two papers by Hangan. In \cite{Han2}, he characterised all Riemannian metrics in open subsets of $\mathbb{R}^n$, $n\geq 3$, with respect to which all $k$-dimensional planes (for some fixed $2\leq k\leq n-2$) that intersect the given set are minimal. By skilful tensorial computations, Hangan proved that such metrics must also have constant sectional curvature. 
	
	Before we discuss the case $k=n-1$, dealt with by Hangan in \cite{Han1}, it is a good moment to summarize all the results above as follows:
	
\begin{thm*}[Beltrami \cite{Bel1}, Schl\"afli \cite{Sch}, Hangan \cite{Han2}] \label{thmbelschhan} Let $n\geq 2$ be an integer, and either $k=1$ or $k\leq n-2$ be a positive integer. If $g$ is a Riemannian metric on a connected non-empty open subset $U$ of $\mathbb{R}^{n}$ with respect to which all $k$-dimensional planes intersecting $U$ are minimal in $(U,g)$, then $g$ has constant sectional curvature.
\end{thm*}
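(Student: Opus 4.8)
The plan is to reduce every case to the classical fact that a \emph{projectively flat} Riemannian metric has constant sectional curvature, and then to recall why that fact holds. Here $(U,g)$ is projectively flat if the Levi-Civita connection $\nabla^g$ is projectively equivalent to the flat connection $\overline{\nabla}$ of $\mathbb{R}^n$, i.e.\ $\nabla^g_XY-\overline{\nabla}_XY=\phi(X)Y+\phi(Y)X$ for some $1$-form $\phi$ on $U$; equivalently, every straight line meeting $U$ is an unparametrised geodesic of $(U,g)$. When $k=1$ the hypothesis \emph{is} projective flatness, since a one-dimensional submanifold is minimal exactly when its image is a geodesic.

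Now let $2\le k\le n-2$, so $n\ge4$; the point is to show this hypothesis already forces projective flatness. Following Hangan \cite{Han2}, introduce the difference tensor $\mathcal{T}_g(X,Y,Z)=g(\nabla^g_XY-\overline{\nabla}_XY,Z)$, the analogue of \eqref{eqfundamentaltensor} with $\overline{\nabla}$ the flat connection of $\mathbb{R}^n$; it is symmetric in its first two arguments. Since every affine $k$-plane $p+V$ is totally geodesic for $\overline{\nabla}$, its second fundamental form in $(U,g)$ at $p$ is the $g$-normal component of $\nabla^g_XY$, so $p+V$ is minimal at $p$ precisely when $\sum_{i=1}^k\mathcal{T}_g(e_i,e_i,Z)=0$ for every $Z$ in the $g_p$-orthogonal complement of $V$, where $\{e_i\}$ is a $g_p$-orthonormal basis of $V$. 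Fixing a unit vector $Z$ and letting $V$ range over the $k$-dimensional subspaces of $Z^{\perp_g}$, this says the symmetric bilinear form $q_Z:=\mathcal{T}_g(\cdot,\cdot,Z)$ on the $(n-1)$-dimensional space $Z^{\perp_g}$ has vanishing $g$-trace over every $k$-dimensional subspace; since $k\le n-2=\dim Z^{\perp_g}-1$, diagonalising $q_Z$ and comparing traces over two $k$-element sets of eigenvectors differing in one index forces all eigenvalues of $q_Z$ to agree and hence to vanish, so $q_Z\equiv0$. Thus $\mathcal{T}_g(X,Y,Z)=0$ whenever $g(X,Z)=g(Y,Z)=0$; since $\mathcal{T}_g(X,X,Z)$ is, for fixed $Z$, a quadratic polynomial in $X$ vanishing on the hyperplane $g(\cdot,Z)=0$, it is divisible by the linear form $g(\cdot,Z)$, and a short polarisation argument then gives $\mathcal{T}_g(X,Y,Z)=\tfrac12\big(\phi(X)g(Y,Z)+\phi(Y)g(X,Z)\big)$ for a $1$-form $\phi$ on $U$. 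This is exactly the condition that $\nabla^g$ and $\overline{\nabla}$ be projectively equivalent.

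It remains to prove that a projectively flat $(U,g)$ has constant sectional curvature. Writing $P(X,Y)=\phi(X)Y+\phi(Y)X$, imposing that $\nabla^g=\overline{\nabla}+P$ be the Levi-Civita connection of $g$ forces $\phi$ to be exact and determined by the volume density of $g$; substituting into $R^g(X,Y)Z=(\overline{\nabla}_XP)(Y,Z)-(\overline{\nabla}_YP)(X,Z)+P(X,P(Y,Z))-P(Y,P(X,Z))$ and using that $\overline{\nabla}$ is flat and that $R^g$ has the algebraic symmetries of a curvature tensor shows that $R^g$ is of constant-curvature type, $R^g(X,Y,X,Y)=c\,(|X|_g^2|Y|_g^2-g(X,Y)^2)$ for a smooth function $c$ on $U$. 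For $n\ge3$, Schur's lemma makes $c$ locally constant, which is Schl\"afli's theorem \cite{Sch}; for $n=2$ the identity is Beltrami's, and unravelling it produces his explicit local formula for $g$ recalled in Appendix A, and again constant Gaussian curvature \cite{Bel1}.

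I expect the genuine difficulty to lie in two places. The first is the last step above: ``projectively flat $\Rightarrow$ constant sectional curvature'' is Schl\"afli's rather involved tensorial computation when $n\ge3$ and Beltrami's original one when $n=2$, neither of which is short. The second is the passage, for $2\le k\le n-2$, from the plane-by-plane vanishing to the pointwise algebraic form of $\mathcal{T}_g$: a single $k$-plane leaves the components $\mathcal{T}_g(e_i,e_j,e_i)$ and $\mathcal{T}_g(e_i,e_i,e_i)$ untouched, and it is only the coordinate-free conclusion $q_Z\equiv0$ for every $Z$, together with the divisibility observation, that pins them down; in \cite{Han2} this is carried out by direct tensor calculus. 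Finally, note that the argument breaks down at $k=n-1$, since then $Z^{\perp_g}$ is one-dimensional and no algebraic constraint on $\mathcal{T}_g$ is extracted — consistently with the existence of many non-flat metrics on $\mathbb{R}^n$ with minimal hyperplanes.
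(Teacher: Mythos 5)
The paper does not prove this statement at all: it is presented in Appendix~A purely as a summary of the cited results of Beltrami, Schl\"afli and Hangan, so there is no in-paper argument to compare yours against. That said, your reconstruction is sound in outline and is in the spirit of the paper's own formalism (your difference tensor is exactly the $\mathcal{T}_g$ of Lemma~\ref{lemfundamentaltensor}, with $\overline{\nabla}$ now the flat connection). The genuinely new content you supply is the reduction of the case $2\le k\le n-2$ to projective flatness: the observation that $q_Z$ must have vanishing trace on every $k$-dimensional subspace of the $(n-1)$-dimensional space $Z^{\perp_g}$, hence vanish, and the divisibility/polarisation step pinning $\mathcal{T}_g$ to the projective form $\tfrac12(\phi(X)g(Y,Z)+\phi(Y)g(X,Z))$ --- this is correct (the eigenvalue comparison needs exactly $k\le n-2$, and showing $\mu_Z$ is independent of $Z$ follows from linearity of $\mathcal{T}_g(X,X,\cdot)$ in $Z$), and it cleanly explains both why the hyperplane case $k=n-1$ escapes the theorem and why all cases collapse to $k=1$. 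The two steps you defer are precisely the contents of the cited papers, so this is legitimate; but be aware that the thinnest point is $n=2$: there the ``constant-curvature type plus Schur'' endgame is vacuous (every surface curvature tensor has that algebraic form and Schur's lemma fails), so constancy of the Gaussian curvature really does require Beltrami's integrability computation for the PDE satisfied by $\phi$ and $g$, not just the algebraic symmetries of $R^g$. You flag this, which is fine for a statement the paper itself only attributes, but a self-contained proof would have to carry out that computation.
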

	The Beltrami-Cayley-Klein metric on the ball in $\mathbb{R}^n$, the Euclidean metric itself, and the push-forward of the canonical metric on the sphere $\mathbb{S}^n$ to $\mathbb{R}^n$ by a central projection provide examples with all possible values of constant sectional curvature (up to scaling).
	
	Furthermore, since central projections map (pieces of) $k$-equators in $\mathbb{S}^n$ into $k$-planes in $\mathbb{R}^n$ for every $k=1,\ldots,n-1$, it follows from the Theorem above that, for each fixed integer $n\geq 2$ and each positive integer $k\leq n-2$ or $k=1$, a Riemannian metric on $\mathbb{S}^n$ with respect to which all $k$-equators are minimal must have constant positive sectional curvature. Thus, the focus on the case $k=n-1\geq 2$ was justified. \\
	
	What else can be said about the case $k=n-1\geq 2$? Despite the classification result of Hangan (\cite{Han1}, Theorem 2), it seems that the systematic study of the geometric properties of the metrics on open domains of $\mathbb{R}^{n}$ with respect to which all hyperplanes are minimal has not been carried out yet.
	
	The subject came into focus in the nineties after the works of Mohammed Bekkar, who observed that the Heisenberg metric
	\begin{equation*}
	g = dx^2 + dy^2 + (dz + ydx - xdy)^2
	\end{equation*}
on $\mathbb{R}^3$ is a homogeneous metric with respect to which all planes are minimal \cite{Bek1}. Robert Lutz then raised the question of what metrics on (open domains of) $\mathbb{R}^n$ have minimal hyperplanes (see \cite{Bek3}).\footnote{In the paper \cite{Han1}, where Hangan gives his solution to the problem, he also acknowledges Lutz as proposing the question.} 

	Bekkar started the systematic study of such metrics when $n=3$ (\cite{Bek2}, \cite{Bek3}). In particular, he classified those that are axy-symmetric, like the Heinseberg metric, and computed the formal dimension of the set of solutions near the Euclidean metric.\footnote{In \cite{Bek3}, Bekkar comments that Robert Bryant studied the problem around the same the time, and in particular he also computed the dimension of the space of metrics with minimal planes in $\mathbb{R}^3$ to be 20.} Hangan gave a different characterisation of the Heisenberg metric under an algebraic assumption (\cite{Han1}, Theorem 3), and observed in \cite{Han2} that the generalised Heisenberg spaces introduced by Aroldo Kaplan \cite{Kap} are homogeneous examples of such metrics on $\mathbb{R}^{n}$, for all dimensions $n\geq 4$.

	It would be interesting, for instance, to investigate which of such metrics on open connected subsets $U\subset \mathbb{R}^n$ are complete. This geometric restriction probably imposes restriction on the open subset itself (for instance, see Remark \ref{rmkmaximaldomain}). Also, in particular, it would be interesting to find non-homogeneous examples, and study their behaviour near infinity. 
	
	Regarding their minimal hyperplanes, we think that an interesting question would be to decide, for instance, under which further conditions the hyperplanes have finite total (intrinsic) curvature, and under which meaningful criteria they can be classified among area-minimising surfaces or among minimal planes. (They are certainly area-minimising surfaces, because parallel hyperplanes are minimal and foliate the space. Are they calibrated in the sense of \cite{HarLaw}? We remark that the three-dimensional Heisenberg space contains non-planar entire minimal graphs \cite{FerMir}, which have non-positive Gaussian curvature and at most quadratic intrinsic area-growth \cite{Man}). 
		
\section{Appendix B - Other model geometries}

	An equator of $\mathbb{S}^n$ is uniquely determined once a point in the Grassmanian of unoriented tangent hyperplanes of $\mathbb{S}^n$ is given, and it is moreover totally geodesic for the canonical metric. As we saw, the problem of understanding metrics with respect to which this particular family of hypersurfaces are all minimal led to interesting results. Thus, it is natural to pose similar questions about other model spaces that contain natural families of submanifolds that are, on one hand, uniquely determined by points in some special submanifold of their Grassmanian of unoriented tangent $k$-planes, and which are, one the other hand, totally geodesic with respect to some canonical geometry.
	
	The projective spaces $\mathbb{RP}^n$, $\mathbb{CP}^n$, $\mathbb{HP}^n$	and the Cayley plane $Ca\mathbb{P}^2$, endowed with their standard symmetric metrics (that is, the compact rank one symmetric spaces other than $\mathbb{S}^n$), together with the families of their projective linear subspaces, are, therefore, the next model geometries to look at. 
	
	Since metrics with minimal equators have antipodal invariance (by Corollary \ref{corantipodalsymmetry}), the study of this problem in $\mathbb{RP}^n$ is subsumed in the study of the problem in $\mathbb{S}^n$.
	
	In \cite{Luz}, Luciano L. Junior made a systematic study of almost Hermitian structures on $\mathbb{CP}^m$ such that all $\mathbb{CP}^k$, for a given $k=1,\ldots,m-1$, are \textit{minimal and complex} submanifolds. For instance, he proved that, when $m\geq 3$ and $k=m-1$, the necessary and sufficient condition is that the complex structure is integrable and the K\"ahler form $\omega(\cdot,\cdot)=g(J\cdot,\cdot)$ satisfies $d\omega^{n-1}=0$. These are the so called \textit{balanced} Hermitian structures \cite{Mic}, and most of them are not Kähler.

	To our best knowledge, the case of the other projective spaces has not been investigated yet, nor the case of the symmetric metrics on their non-compact companions. (For the case of the Euclidean space, see Appendix A).
	
	Aside from projective spaces, other relevant model geometries are those admitting calibrations \cite{HarLaw}. See for instance the works \cite{BalMad1} and \cite{BalMad2} of Gavin Ball and Jesse Madnick.

\end{document}